\newtheorem{theorem}{Theorem}[section]
\newtheorem{proposition}[theorem]{Proposition}
\newtheorem{corollary}[theorem]{Corollary}
\newtheorem{lemma}[theorem]{Lemma}
\theoremstyle{definition}
\newtheorem{definition}[theorem]{Definition}
\newtheorem{notation}[theorem]{Notation}
\newtheorem{example}[theorem]{Example}
\newtheorem{remark}[theorem]{Remark}
\newcommand{\PP}{\mathbb{P}}
\newcommand{\V}{Var}
\begin{document}\thispagestyle{empty}






\title[random monomial ideal associated to graphs]
{Probabilities of random monomial ideals associated to large graphs} 

\author[D. Munoz George]{Daniel Munoz George}
\address{
Department of Mathematics and Statistics\\
University of Hong Kong\\ 
Pok Fu Lam, Hong Kong
}
\email{dmunozgeorge@gmail.com}

\author[H. Mu\~noz-George]{Humberto Mu\~noz-George}
\address{
Department of Mathematics\\
Centro de Investigaci\'on y de Estudios
Avanzados del
IPN\\
Apartado Postal
14--740 \\
Ciudad de M\'exico, M\'exico, CP 07000.
}
\email{humbertomgeorge@gmail.com}

\author[K. Mu\~noz George]{Kevin Mu\~noz George}
\address{
Faculty of Sciences\\
Universidad Nacional Autónoma de México\\
Ciudad de M\'exico, M\'exico, CP 04510.
}
\email{kevinmunozgeorge@gmail.com}

\footnotetext[1]{The first author was supported by Hong Kong RGC Grant 16303922, NSFC12222121 and NSFC12271475}
\footnotetext[2]{The second author was supported by a scholarship from CONACYT, M\'exico.}

\maketitle 

\begin{abstract}
Inspired by the Erdős–Rényi model, we propose a new model for freesquare random monomial ideals generated by edges and covers of a graph. This permit us to investigate the conditions of normality for which we obtain asymptotic results. We also elaborate on asymptotic results for other invariants such as the Krull dimension (for which we obtain threshold function), the regularity and the $v$-number.    
\end{abstract}


\section{Introduction}\label{Section: introduction}

Sometimes in mathematics, a way to understand a complex object is to use randomness as a tool to study the average behavior of the object. There is already an extensive literature in probability which has proved to be effective for the study of complicated objects. To mention a few examples, the probabilistic method has been applied to large structures \cite{AS,CWY} and the study of large random matrices to understand its eigenvalue distribution \cite{V,Sh11,LP1,LSX,KKP,CES23,GB24}. In algebra, probability has also been used to study objects such as the study of random walks in random groups \cite{G,CCMP,HMZ} and the study of random permutations \cite{LWW,JHB}. In combinatorics, probability has been used to study large random graphs \cite{G59,ER}. Finally, there is also smaller body of literature on the study of random monomial ideals \cite{ranmon,SWY,BPEY,PSW}. 

Monomial ideals constitute one of the most widely studied classes of ideals in commutative algebra, not only because of their intrinsic algebraic richness, but also due to their deep and fruitful connections with combinatorics, geometry, and optimization. The study of algebraic invariants through combinatorial data has become a central theme. In particular, classical references such as the books by Herzog–Hibi \cite{Herzog} and R. H. Villarreal \cite{monalg-rev} highlight how invariants like depth, dimension, regularity, and normality can often be interpreted in purely combinatorial or polyhedral terms when dealing with monomial ideals.

Within this framework, there are two important types of monomial ideals associated to a given graph (or, more generally, to a clutter): \textit{edge ideals} and \textit{cover ideals}. These arise as two fundamental constructions linking squarefree monomial ideals with graph theory. Given a simple graph, its edge ideal is generated by all squarefree monomials of degree two corresponding to the edges of the graph, while the cover ideal is generated by monomials corresponding to its minimal vertex covers. These ideals form two dual and deeply interconnected classes that have been studied extensively, as they explicitly reflect the combinatorial structure of the underlying graph. This perspective has produced a rich body of literature relating algebraic properties such as normality, unmixedness, regularity, and the behavior of symbolic and ordinary powers to classical graph-theoretic invariants. There is also a growing body of literature on the general theory of monomial ideals and, in particular, on edge and cover ideals \cite{Eisen, Herzog, monalg-rev}. 

Inspired in the work of De Loera et al. \cite{ranmon} and the Erdős–Rényi model, we propose a new model for random monomial ideals generated by the edges and the covers of a graph. In the former case, we obtain ideals generated by monomials of degree two while in the latter case the generators can have arbitrary degree. In particular, in the former setting we are able to study the normality of the ideal. It is important to note that this has not been done previously. This is likely due to the absence of general normality criteria for arbitrary monomial ideals; however, in the setting of squarefree monomial ideals generated by the edges of a graph, normality criteria has been extensively studied \cite{icdual, graphs}. In addition to normality normality, we investigate the Krull dimension of the ideal. In our model, the Krull dimension coincides with the independence number of the graph. The independence number of a graph is the size of the largest independent set of vertices, that is, vertices that are disconnected. Thanks to the previous fact, our algebraic problem becomes a problem in random graphs. We determine threshold function for the property of an Erdős–Rényi graph having the empty graph with $t$ vertices as an induced subgraph. This allow us to find a threshold function for the property of the Krull dimension being larger or equal than $t$. Here $t$ is an arbitrary positive integer. Finally we use our main theorems to deduce asymptotic results on the regularity and the $v$-number of the ideal. 

\section{Model and results}\label{Section: model and results}

Let us first introduce our model. Let $K$ be a field and let $S=K[x_1,\dots,x_n]$ be the polynomial ring in $n$ variables. We consider the random graph $\mathcal{G}=(V,E)$ whose vertices are indexed by the variables $V=\{x_1,\dots,x_n\}$ and each edge $\{x_i,x_j\}$ is chosen with probability $p$ for $i\neq j$. Each realization $G=(V,E)$ of the random graph $\mathcal{G}$ induces two ideals; $I(G)$ and $I_c(G)$, called the edge ideal and the ideal of covers respectively which are defined as follows.

\begin{enumerate}
    \item $I(G)$ is a squarefree monomial ideal generated by the monomials $x_ix_j$ for each $\{x_i,x_j\}\in E$.
    \item $I_c(G)$ is a squarefree monomial ideal generated by the monomials $x_{i_1}\cdots x_{i_r}$ where $\{x_{i_1},\dots,x_{i_r}\}$ is a minimal vertex cover of the graph.
\end{enumerate}

A more detailed definition of $I(G)$ and $I_c(G)$ with examples is left to definition \ref{Definition: The edge and ideal of covers}.

With the previous definitions the ideals $I(\mathcal{G})$ and $I_c(\mathcal{G})$ are random ideals of $S$. As our model is inspired by the Erdős–Rényi model we say the ideals have Erdős–Rényi distribution which we denote by $I(\mathcal{G})\sim IER(n,p)$ and $I_c(\mathcal{G})\sim IER_c(n,p)$.

Note that if $B\subset S$ is a subset of monomial ideals of degree two, say $B=\{x_{i_k}x_{i_l}: 1\leq i_k,i_l\leq n\}$ then

$$\PP(I(\mathcal{G})=(B))=p^{|B|}(1-p)^{\binom{n}{2}-|B|},$$
where $(B)$ denotes the ideal generated by $B$.

\subsection{results} Our results describe how the probability of a property varies depending on the parameters $p$ and $q=:1-p$. 

\textit{(A) Normality.} Let $R$ be a ring and let $I$ be an ideal of $R$, an element $z \in R$ is \textit{integral} over $I$ if $z$ satisfies an equation
\begin{align*}
z^l+a_1z^{l-1}+...+a_{l-1}z+a_l=0, && a_i \in I^i.
\end{align*}
The \textit{integral closure}\index{integral closure} of $I$ is the set of all elements $z \in R$ which are integral over $I$. This set will be denoted by $\overline{I}$

\begin{definition}
If $I=\overline{I}$, $I$ is said to be \textit{integrally closed}\index{integrally closed} or \textit{complete}\index{complete}.
\end{definition}

\begin{definition}
If $I^n$ is integrally closed for all $n\geq 1$, $I$ is said to be \textit{normal}\index{normal}.
\end{definition}

Our first result provides asymptotic probabilities for the properties $I(\mathcal{G})$ being normal, $I_c(\mathcal{G})$ being normal and $I_c(\mathcal{G})\text{ not being normal}\cap \beta_0(G)\leq 2$. Here $\beta_0(G)$ denotes the independence number of the graph $G$ as defined in Section \ref{Section: Monomial ideals}. We prove under suitable conditions on the parameters $p$ and $q$ the events (or their complements) hold asymptotically almost surely. 

\begin{theorem}\label{Theorem: Main theorem normality 1}
Let $I(\mathcal{G})$ be a random monomial ideal with Erdős–Rényi distrution, i.e., $I(\mathcal{G})\sim IER(n,p)$. Then
\begin{equation}
\lim_{n\rightarrow \infty}\PP(I(\mathcal{G})\text{ is normal})= \left\{ \begin{array}{lcc} 1 & \text{if} & p=o(\frac{1}{n}) \\ \\ 0 & \text{if} & pq^{3/2}=\omega(\frac{1}{n}) \end{array} \right.
\end{equation}
\end{theorem}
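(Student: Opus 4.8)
My plan is to treat the two regimes separately, relying on the combinatorial characterization of normality for edge ideals. Recall (from \cite{graphs, icdual}) that the edge ideal $I(G)$ of a simple graph is normal if and only if $G$ satisfies the so-called \emph{odd hole condition}: every odd cycle has a chord, equivalently $G$ has no induced odd cycle of length $\geq 5$ (a graph whose only induced odd cycles are triangles). In fact a cleaner equivalent statement is that $I(G)$ is normal iff $G$ has no induced odd cycle of length $5$ or more; for bipartite $G$ this is automatic, and in general it is the presence of an induced $C_5$, $C_7$, etc.\ that destroys normality. So the whole statement reduces to estimating the probability that the Erd\H{o}s--R\'enyi graph $\mathcal{G}(n,p)$ contains such an induced subgraph.

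For the first regime, $p = o(1/n)$: here $\mathcal{G}(n,p)$ is extremely sparse, well below the threshold $p \sim 1/n$ for the emergence of cycles. The plan is a straightforward first-moment computation: the expected number of cycles of length $k$ (for any fixed $k \geq 3$) is $\binom{n}{k}\frac{(k-1)!}{2} p^k \leq (np)^k$, and since $np = o(1)$, summing over $k$ shows that with probability tending to $1$ the graph $\mathcal{G}$ is a forest. A forest is bipartite, hence has no odd cycles at all, hence $I(\mathcal{G})$ is normal. This gives the limit $1$.

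For the second regime, $pq^{3/2} = \omega(1/n)$: I want to show $\mathcal{G}(n,p)$ contains an induced $C_5$ a.a.s., which forces non-normality. The natural tool is the second moment method applied to $X = $ number of induced $5$-cycles in $\mathcal{G}$. One computes $\mathbb{E}[X] = \binom{n}{5}\cdot 12 \cdot p^5 q^5$ (there are $12$ distinct $5$-cycles on a fixed $5$-set, each requiring its $5$ edges present and its $5$ non-edges absent), which is of order $n^5 p^5 q^5$; the hypothesis $pq^{3/2} = \omega(1/n)$ should be exactly what makes a suitable power of $\mathbb{E}[X]$ (or a related truncated count) blow up — note $n^5 (pq^{3/2})^{5} = n^5 p^5 q^{15/2}$, so the $q$-exponent $5$ versus $15/2$ discrepancy must be absorbed by splitting into the cases $q$ bounded away from $0$ and $q \to 0$, or by a more careful choice of which induced structure to count. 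Then $\mathbb{E}[X] \to \infty$ and one bounds the variance by the usual sum over pairs of $5$-sets sharing $j = 0,1,2,3,4$ vertices, checking in each overlap case that the contribution is $o(\mathbb{E}[X]^2)$, so that Chebyshev gives $X > 0$ a.a.s.

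The main obstacle, and the step requiring genuine care, is the variance estimate for the induced subgraph count in the regime where $q \to 0$ (i.e.\ $p \to 1$): when $p$ is close to $1$, demanding that five specific non-edges be \emph{absent} becomes a rare constraint, and overlapping copies of $C_5$ can share non-edges in ways that make the correlations delicate; this is precisely why the hypothesis involves the asymmetric quantity $pq^{3/2}$ rather than a symmetric one, and getting the bookkeeping of shared edges versus shared non-edges right across all overlap patterns is where the real work lies. A secondary point to handle cleanly is confirming that the relevant normality criterion is an "induced odd cycle" statement and citing it precisely, so that the existence of a single induced $C_5$ genuinely certifies $I(\mathcal{G})$ is not normal.
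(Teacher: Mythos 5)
The first regime of your proposal is fine and is essentially the paper's argument: a first-moment bound shows $\mathcal{G}$ is a.a.s.\ acyclic, hence bipartite, hence has no odd cycles and therefore a normal edge ideal (the paper phrases this as $\PP(\mathcal{G}\text{ has a Hochster configuration})\leq\PP(\mathcal{G}\text{ has a cycle})\to 0$, via Proposition~\ref{Proposition: having a cycle}).

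The second regime has a genuine gap, and it originates in the normality criterion you invoke. The correct characterization (Theorem~\ref{apr16-03}, from \cite{graphs}) is that $I(G)$ is normal if and only if $G$ admits no \emph{Hochster configuration}: two induced odd cycles $C_1,C_2$ with $C_1\cap N_G(C_2)=\emptyset$, i.e.\ vertex-disjoint with no edge joining them. A single induced odd hole does \emph{not} destroy normality --- for instance $I(C_5)$ is normal, since $C_5$ contains only one odd cycle and a Hochster configuration needs two --- so exhibiting one induced $C_5$ a.a.s.\ proves nothing about non-normality. This is not a citation technicality; it changes which structure you must count. The minimal Hochster configuration is $T$, the disjoint union of two triangles on $6$ vertices, which as an induced subgraph requires $6$ edges present and $\binom{6}{2}-6=9$ non-edges absent, so $\mathbb{E}(Y_T)=\binom{n}{6}\binom{6}{3}p^6q^9\asymp n^6p^6q^9=(npq^{3/2})^6$, and the hypothesis $pq^{3/2}=\omega(\frac{1}{n})$ is exactly the condition making this expectation diverge. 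The $5$ versus $15/2$ exponent mismatch you noticed for induced $C_5$'s is a symptom of having chosen the wrong subgraph (the ratio of non-edges to edges is $1$ for $C_5$ but $3/2$ for $T$); no case-splitting on $q$ will repair it, because the implication ``induced $C_5$ $\Rightarrow$ not normal'' is simply false. With $T$ in place of $C_5$, the second-moment/Chebyshev computation you outline does go through as in the paper (Proposition~\ref{Proposition: Moments of T} and Corollary~\ref{Corollary: Lower bound of T}), including the careful bookkeeping of shared edges versus shared non-edges over the overlap sizes $|V\cap W|=2,\dots,6$ that you correctly identify as the delicate step.
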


\begin{remark}
Observe that $pq^{3/2}\leq p$. If $p<<\frac{1}{n}$ then $I(\mathcal{G})$ is normal asymptotically almost surely. If $\frac{1}{n}<<pq^{3/2}$ then $I(\mathcal{G})$ is not normal asymptotically almost surely. It remains to understand the behavior of the event
$$\{I(\mathcal{G})\text{ is normal}\}$$
for $pq^{3/2} << \frac{1}{n} << p$.  \end{remark}

\begin{theorem}\label{Theorem: Main theorem normality 3}
Let $I_c(\mathcal{G})$ be a random monomial ideal with Erdős–Rényi distribution, i.e., $I_c(\mathcal{G})\sim IER_c(n,p)$. If $p=o(\frac{1}{n})$ then
\begin{equation}
\lim_{n\rightarrow \infty}\PP(I_c(\mathcal{G})\text{ is normal})=1
\end{equation}
\end{theorem}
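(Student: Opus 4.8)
The strategy is to transfer the question to the structure of the random graph in the very sparse regime $p=o(1/n)$ and then to apply a classical normality criterion for ideals of covers. The two ingredients are: (i) when $p=o(1/n)$ the graph $\mathcal{G}\sim\mathcal{G}(n,p)$ is a forest asymptotically almost surely; and (ii) the ideal of covers $I_c(G)$ of any bipartite graph $G$ — in particular of any forest — is normal. Since these two facts combine immediately, the proof should be short, and I do not expect a genuine obstacle.

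\textbf{Step 1 (the graph is a forest a.a.s.).} I would bound the expected number of cycles in $\mathcal{G}$ by the first moment method. The number of potential cycles of length $k$ on the vertex set is $\binom{n}{k}\frac{(k-1)!}{2}\le \frac{n^{k}}{2k}$, and each such cycle is present with probability $p^{k}$, so the expected total number of cycles is at most $\sum_{k\ge 3}\frac{(np)^{k}}{2k}$. Since $p=o(1/n)$ we have $np\to 0$, so $np<1$ for $n$ large and this sum is at most $\frac{(np)^{3}}{1-np}\to 0$. By Markov's inequality $\PP(\mathcal{G}\text{ contains a cycle})\to 0$, hence $\mathcal{G}$ is acyclic, i.e.\ a forest, asymptotically almost surely.

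\textbf{Step 2 (cover ideals of forests are normal).} Every forest is bipartite, so it suffices to recall the classical fact that $I_c(G)$ is normal whenever $G$ is bipartite: for bipartite $G$ the symbolic and ordinary powers of the cover ideal agree, $I_c(G)^{(s)}=I_c(G)^{s}$ for all $s\ge 1$ (this reflects König's theorem / integrality of the covering polytope), and such a normally torsion-free monomial ideal is normal; see \cite{monalg-rev, Herzog}. The degenerate cases are harmless: if $\mathcal{G}$ has no edges then $I_c(\mathcal{G})=S$ is trivially normal, and isolated vertices do not affect the cover ideal. Thus, on the event that $\mathcal{G}$ is a forest, $I_c(\mathcal{G})$ is normal.

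\textbf{Conclusion.} Combining the two steps,
\[
\PP\big(I_c(\mathcal{G})\text{ is normal}\big)\ \ge\ \PP(\mathcal{G}\text{ is a forest})\ \longrightarrow\ 1 .
\]
The only points requiring care are keeping the cycle-count sum uniform all the way up to $k=n$ (handled once $np<1$) and checking that the edgeless and isolated-vertex cases are subsumed by the bipartite criterion, which they are. (This is the ideal-of-covers analogue of the $p=o(1/n)$ half of Theorem~\ref{Theorem: Main theorem normality 1}, whose proof presumably proceeds along the same lines, using that edge ideals of bipartite graphs are likewise normal.)
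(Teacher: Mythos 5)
Your proposal is correct and follows essentially the same route as the paper: the paper also uses the chain of inclusions $\{\mathcal{G}\text{ has no cycles}\}\subset\{\mathcal{G}\text{ is bipartite}\}\subset\{I_c(\mathcal{G})\text{ is normal}\}$ together with the first-moment bound on the expected number of cycles (its Proposition on having a cycle). The only cosmetic difference is the justification that cover ideals of bipartite graphs are normal --- the paper cites the perfect-graph result of \cite{RAPC}, while you invoke the equality of symbolic and ordinary powers --- both of which are standard.
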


\begin{theorem}\label{Theorem: Main theorem normality 2}
Let $I_c(\mathcal{G})$ be a random monomial ideal with Erdős–Rényi distribution, i.e., $I_c(\mathcal{G})\sim IER_c(n,p)$. If $q=o(\frac{1}{n})$ then
\begin{equation}
\lim_{n\rightarrow \infty}\PP(I_c(\mathcal{G})\text{ is not normal} \cap \beta_0(\mathcal{G})\leq 2)=0.
\end{equation}
\end{theorem}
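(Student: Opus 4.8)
The plan is to rephrase the event in terms of the complementary random graph $\bar{\mathcal G}$ (whose edges are exactly the non-edges of $\mathcal G$, hence an Erdős–Rényi graph on $n$ vertices with edge probability $q$) and then run a first–moment argument. Recall that $\beta_0(G)\le 2$ holds precisely when $\bar G$ is triangle–free; in that case every maximal independent set of $G$ is either an isolated vertex or an edge of $\bar G$, so that, writing $m=x_1\cdots x_n$, the cover ideal $I_c(G)$ is generated by the monomials $m/x_u$ with $u$ isolated in $\bar G$ together with the monomials $m/(x_ux_v)$ with $uv\in E(\bar G)$. The algebraic fact I intend to invoke — from the normality criterion for cover ideals in Section~\ref{Section: Monomial ideals}, together with the Newton–polyhedron description of integral closures of monomial ideals — is that for such $G$ the ideal $I_c(G)$ is normal whenever $\bar G$ is a forest (more generally whenever $\bar G$ is bipartite, in which case $G$ is a perfect co-bipartite graph). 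Equivalently: if $\beta_0(\mathcal G)\le 2$ and $I_c(\mathcal G)$ is not normal, then $\bar{\mathcal G}$ must contain a cycle.

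Granting this, the quantity we must control is bounded above by $\PP\!\left(\bar{\mathcal G}\text{ contains a cycle}\right)$. Counting potential cycles of each length and using independence of the edges of $\bar{\mathcal G}$,
\[
\mathbb{E}\bigl[\#\{\text{cycles of }\bar{\mathcal G}\}\bigr]=\sum_{\ell=3}^{n}\binom{n}{\ell}\frac{(\ell-1)!}{2}\,q^{\ell}\ \le\ \sum_{\ell\ge 3}\frac{(nq)^{\ell}}{2\ell}.
\]
Since $q=o(1/n)$ we have $nq\to 0$, so for $n$ large the right–hand side is at most $\tfrac16\sum_{\ell\ge 3}(nq)^{\ell}=O\!\bigl((nq)^{3}\bigr)\to 0$. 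By Markov's inequality $\PP(\bar{\mathcal G}\text{ contains a cycle})\to 0$, whence
\[
\lim_{n\to\infty}\PP\bigl(I_c(\mathcal G)\text{ is not normal}\cap \beta_0(\mathcal G)\le 2\bigr)=0.
\]

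The probabilistic half of this argument is routine; the essential point is the reduction in the first paragraph — identifying exactly which graphs $G$ with $\beta_0(G)\le 2$ have a non–normal cover ideal, and verifying that this forces a cycle in $\bar G$. The cleanest way to settle it is to prove the equivalent statement that $I_c(G)$ is normal whenever $\bar G$ is a forest: one then has the explicit list of generators of $I_c(G)$ recorded above (lying in the two consecutive degrees $n-1$ and $n-2$), and must check, via the polyhedral description of $\overline{I_c(G)^{k}}$, that every lattice point of the $k$-th dilate of the convex hull of their exponent vectors together with the positive orthant already decomposes as a sum of $k$ generators — i.e. that this polyhedron has the integer decomposition property. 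This generalizes the classical normality of the cover ideal $I_c(K_n)$ of the complete graph (the squarefree Veronese ideal), and the mixed-degree nature of the generators is the main technical nuisance.
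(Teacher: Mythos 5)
Your probabilistic half is correct and matches the paper's: reduce to the event that $\bar{\mathcal{G}}$ contains a cycle, note $\bar{\mathcal{G}}\sim ER(n,q)$, and kill that event by a first-moment count of cycles when $nq\to 0$. The description of the generators of $I_c(G)$ when $\beta_0(G)\le 2$ (the monomials $m/x_u$ for $u$ isolated in $\bar G$ and $m/(x_ux_v)$ for $uv\in E(\bar G)$) is also right.

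The gap is in the one step that carries all the content: the implication ``$I_c(\mathcal{G})$ not normal and $\beta_0(\mathcal{G})\le 2$ $\Rightarrow$ $\bar{\mathcal{G}}$ has a cycle.'' You state this as a fact you ``intend to invoke,'' and then, in your final paragraph, concede that the clean way to establish it is to verify the integer decomposition property for the Newton polyhedron of the mixed-degree generators --- a verification you do not carry out and yourself flag as ``the main technical nuisance.'' As written, the key lemma is asserted, not proved. The paper closes this step in one line via the Duality criterion (Theorem \ref{Duality criterion} of \cite{icdual}): for $\beta_0(G)\le 2$, $I_c(G)$ is normal if and only if $\bar G$ admits no Hochster configuration, and a Hochster configuration consists of two odd cycles, so a cycle-free $\bar G$ forces normality. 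Your own parenthetical remark actually contains a second valid fix: if $\bar G$ has no cycle it is a forest, hence bipartite, hence $G$ is co-bipartite and therefore perfect, so $I_c(G)$ is normal by \cite{RAPC} --- this is exactly the mechanism the paper uses for Theorem \ref{Theorem: Main theorem normality 3}, and it does not even require the hypothesis $\beta_0(\mathcal{G})\le 2$. Either citation would complete your argument; the direct polyhedral computation you propose instead is unnecessary and is the part you left undone.
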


\begin{remark}
The reason we intersect with the event $\beta_0(\mathcal{G})\leq 2$ in Theorem \ref{Theorem: Main theorem normality 2} is that for ideals of covers if the independence number of the graph is less than or equal to 2 then the following sentences are equivalent;
\begin{enumerate}
    \item The ideal is not normal.
    \item The complement of the graph admits a Hochster configuration.
\end{enumerate}
With the help of this equivalence we can proceed similarly to the proof of Theorem \ref{Theorem: Main theorem normality 1}. We leave the details to Section \ref{Section: Proof of main results}.
\end{remark}

\textit{(B) Krull dimension.} Let us recall that the Krull dimension of a monomial ideal $I$ is define as $\text{dim}(S/I)$. In the setting of monomial ideals generated by edges of a graph, $G$, the Krull dimension coincides with the independence number of the graph $\beta_0(G)$. Roughly speaking, the independence number is the size of the largest independent set of the graph. For our second result, we obtain a threshold function for the independence number of the graph being larger or equal than $t$ for every fixed integer $t$. As a corollary we prove that under suitable conditions on the parameter $q$ the Krull dimension equals $t$ asymptotically almost surely.

\begin{theorem}\label{Theorem: Main resul Krull}
Let $I(\mathcal{G})$ be a random monomial ideal with Erdős–Rényi distribution and let $t>1$ be a positive integer. Then $\frac{1}{n^{2/(t-1)}}$ is a threshold function for $q$ for the property $\text{dim}(S/I(\mathcal{G}))\geq t$. In other words
\begin{equation}
\lim_{n\rightarrow \infty}\PP(\text{dim}(S/I(\mathcal{G}))\geq t)= \left\{ \begin{array}{lcc} 0 & \text{if} & q=o(\frac{1}{n^{2/(t-1)}}) \\ \\ 1 & \text{if} & q=\omega(\frac{1}{n^{2/(t-1)}}) \end{array} \right.
\end{equation}
\end{theorem}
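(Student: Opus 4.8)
\emph{Proof sketch.} The plan is to reduce the statement to the classical threshold for the appearance of a clique in an Erdős–Rényi graph and then run the first and second moment methods. As recalled in Section~\ref{Section: Monomial ideals}, for an edge ideal one has $\dim(S/I(G)) = \beta_0(G)$, the independence number of $G$. Hence $\dim(S/I(\mathcal{G})) \ge t$ holds precisely when $\mathcal{G}$ has an independent set of size $t$. Since in our model each pair $\{x_i,x_j\}$ is a non-edge independently with probability $q$, a fixed $t$-set of vertices is independent with probability $q^{\binom{t}{2}}$; in fact the event $\{\dim(S/I(\mathcal{G})) \ge t\}$ is exactly the event that the complement graph $\bar{\mathcal{G}}$, which is Erdős–Rényi with edge probability $q$, contains a copy of $K_t$. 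So it suffices to analyse the number $X = X_n$ of independent $t$-subsets of $\mathcal{G}$, a nonnegative integer-valued random variable with $\dim(S/I(\mathcal{G})) \ge t \iff X \ge 1$.

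For the $0$-statement I would use Markov's inequality. One computes
\[
\mathbb{E}[X] = \binom{n}{t}\, q^{\binom{t}{2}} \le n^t\, q^{t(t-1)/2} = \bigl(n\, q^{(t-1)/2}\bigr)^t .
\]
If $q = o(n^{-2/(t-1)})$ then $n\, q^{(t-1)/2} \to 0$, so $\mathbb{E}[X] \to 0$, and therefore $\PP(\dim(S/I(\mathcal{G})) \ge t) = \PP(X \ge 1) \le \mathbb{E}[X] \to 0$.

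For the $1$-statement I would use the second moment method: $\PP(X \ge 1) \ge \mathbb{E}[X]^2/\mathbb{E}[X^2]$, so it is enough to show $\mathbb{E}[X^2] = (1+o(1))\,\mathbb{E}[X]^2$. Writing $\mathbb{E}[X^2] = \sum_{A,B} \PP(A \text{ and } B \text{ both independent})$ over ordered pairs of $t$-subsets and grouping by $j = |A\cap B|$, a pair with overlap $j$ shares exactly $\binom{j}{2}$ vertex-pairs, so the $j$-block contributes $\binom{n}{t}\binom{t}{j}\binom{n-t}{t-j}\, q^{2\binom{t}{2}-\binom{j}{2}}$. The blocks $j=0$ and $j=1$ together are at most $\mathbb{E}[X]^2$. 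For $2 \le j \le t-1$ the ratio of the $j$-block to $\mathbb{E}[X]^2$ is of order $n^{-j} q^{-\binom{j}{2}}$, and since $q = \omega(n^{-2/(t-1)})$ gives $q^{-1} = o(n^{2/(t-1)})$, this ratio is $o\!\left(n^{-j(t-j)/(t-1)}\right) \to 0$ because $j(t-j)/(t-1) > 0$ in that range. Finally the diagonal block $j = t$ equals $\mathbb{E}[X]$, so its ratio to $\mathbb{E}[X]^2$ is $1/\mathbb{E}[X]$, and $q = \omega(n^{-2/(t-1)})$ forces $n q^{(t-1)/2} \to \infty$, hence $\mathbb{E}[X] \to \infty$ and $1/\mathbb{E}[X] \to 0$. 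Combining, $\mathbb{E}[X^2]/\mathbb{E}[X]^2 \to 1$, so $\PP(\dim(S/I(\mathcal{G})) \ge t) = \PP(X \ge 1) \to 1$.

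The reduction and the first-moment half are routine; the main obstacle is the bookkeeping of the overlap blocks in the second-moment step, in particular checking that the exponent $-j(t-j)/(t-1)$ is strictly negative for every $2 \le j \le t-1$. This is exactly the computation that singles out $n^{-2/(t-1)}$ as the threshold, and one must simultaneously track that $\mathbb{E}[X] \to \infty$ above the threshold so that the diagonal term $j=t$ is also negligible. (The hypothesis $t > 1$ only ensures that $2/(t-1)$ is defined; for $t = 1$ the dimension is always at least $1$ since $I(\mathcal{G}) \ne S$.)
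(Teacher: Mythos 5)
Your proposal is correct and follows essentially the same route as the paper: the reduction $\dim(S/I(\mathcal{G}))\ge t \iff \mathcal{G}$ contains an independent $t$-set, Markov's inequality with $\mathbb{E}[X]=\binom{n}{t}q^{\binom{t}{2}}$ for the $0$-statement, and a second moment computation grouped by the overlap $j=|A\cap B|$ with the same per-block contribution $\binom{n}{t}\binom{t}{j}\binom{n-t}{t-j}q^{2\binom{t}{2}-\binom{j}{2}}$ for the $1$-statement. The only cosmetic difference is that you use the $\mathbb{E}[X]^2/\mathbb{E}[X^2]$ form of the second moment method and the exponent $-j(t-j)/(t-1)$, whereas the paper uses the Chebyshev form $1-\mathrm{Var}(X)/\mathbb{E}[X]^2$ and bounds each term via $n^jq^{\binom{j}{2}}=[n^{2/(j-1)}q]^{\binom{j}{2}}\ge[n^{2/(t-1)}q]^{\binom{j}{2}}\to\infty$; these are equivalent.
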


\begin{corollary}\label{Corollary: Main theorem Krull}
Let $I(\mathcal{G})$ be a random monomial ideal with Erdős–Rényi distribution and let $t>1$ be a positive integer. If $q=o(\frac{1}{n^{2/t}})$ and $q=\omega(\frac{1}{n^{2/(t-1)}})$ then $\text{dim}(S/I(\mathcal{G}))$ equals $t$ asymptotically almost surely. In other words
\begin{equation}
\lim_{n\rightarrow \infty}\PP(\text{dim}(S/I(\mathcal{G}))= t)=1.
\end{equation}
\end{corollary}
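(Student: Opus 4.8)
The plan is to deduce Corollary \ref{Corollary: Main theorem Krull} directly from Theorem \ref{Theorem: Main resul Krull} by combining the two relevant threshold statements, one for $\dim(S/I(\mathcal{G}))\ge t$ and one for $\dim(S/I(\mathcal{G}))\ge t+1$. First I would observe that for any realization $G$, the event $\{\dim(S/I(G))=t\}$ is exactly the difference of events $\{\dim(S/I(G))\ge t\}\setminus\{\dim(S/I(G))\ge t+1\}$, so that
\begin{equation}
\PP(\dim(S/I(\mathcal{G}))=t)=\PP(\dim(S/I(\mathcal{G}))\ge t)-\PP(\dim(S/I(\mathcal{G}))\ge t+1).
\end{equation}
Thus it suffices to show the first probability tends to $1$ and the second tends to $0$ under the stated hypotheses on $q$.

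Next I would apply Theorem \ref{Theorem: Main resul Krull} twice. Applying it with the integer $t$: the threshold function for $\dim(S/I(\mathcal{G}))\ge t$ is $n^{-2/(t-1)}$, and since the hypothesis gives $q=\omega(1/n^{2/(t-1)})$, the theorem yields $\PP(\dim(S/I(\mathcal{G}))\ge t)\to 1$. Applying it with the integer $t+1$ (which is legitimate since $t>1$ implies $t+1>1$): the threshold function for $\dim(S/I(\mathcal{G}))\ge t+1$ is $n^{-2/((t+1)-1)}=n^{-2/t}$, and since the hypothesis gives $q=o(1/n^{2/t})$, the theorem yields $\PP(\dim(S/I(\mathcal{G}))\ge t+1)\to 0$. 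Substituting these two limits into the displayed identity gives $\lim_{n\to\infty}\PP(\dim(S/I(\mathcal{G}))=t)=1-0=1$, which is the claim.

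One small consistency check worth recording is that the two hypotheses on $q$ are simultaneously satisfiable, i.e.\ the window $n^{-2/(t-1)}\ll q\ll n^{-2/t}$ is nonempty: this holds because $2/(t-1)>2/t$ for $t>1$, so $n^{-2/(t-1)}=o(n^{-2/t})$ and there is genuine room between the two thresholds. This is not strictly needed for the implication (the corollary is a conditional statement), but it confirms the statement is not vacuous.

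I do not anticipate any real obstacle here, since the entire argument is a formal consequence of Theorem \ref{Theorem: Main resul Krull}; the only point requiring (minor) care is to make sure the index shift $t\mapsto t+1$ stays within the hypothesis $t>1$ of that theorem, which it does, and to correctly translate $2/((t+1)-1)=2/t$ when reading off the second threshold function.
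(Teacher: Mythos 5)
Your proposal is correct and is essentially identical to the paper's own proof: both decompose $\PP(\dim(S/I(\mathcal{G}))=t)$ as $\PP(\dim(S/I(\mathcal{G}))\ge t)-\PP(\dim(S/I(\mathcal{G}))\ge t+1)$ and apply Theorem \ref{Theorem: Main resul Krull} with parameters $t$ and $t+1$. Your additional check that the window $n^{-2/(t-1)}\ll q\ll n^{-2/t}$ is nonempty is a sensible extra remark not present in the paper.
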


\textit{(C) Regularity.} Let $I\subset S$ be a graded ideal and let ${\mathbf F}$ be the minimal graded free resolution of $S/I$ as an $S$-module:
\[
{\mathbf F}:\ \ \ 0\longrightarrow
\bigoplus_{j}S(-j)^{b_{g,j}}
\stackrel{\varphi_g}{\longrightarrow} \cdots
\longrightarrow\bigoplus_{j}
S(-j)^{b_{1,j}}\stackrel{\varphi_1}{\longrightarrow} S
\longrightarrow S/I \longrightarrow 0.
\]
\quad The ideal $I$ has a $d$-\textit{linear
resolution} if
all maps $\varphi_i$, $i\geq 2$, are defined by matrices whose
entries are linear forms and all entries of $\varphi_1$ are forms of degree
$d$. The \textit{projective dimension} of $S/I$, denoted ${\rm
pd}_S(S/I)$, is equal to $g$. 
The {\it Castelnuovo--Mumford regularity\/} of $S/I$ ({\it
regularity} of $S/I$ for short) is defined as
$${\rm reg}(S/I)=\max\{j-i \mid b_{i,j}\neq 0\}.
$$

By a result of R. Villarreal and Delio \cite{v-number} the regularity is a lower bound for the dimension of $S/I$ provided $I$ is a squarefree monomial ideal.

\begin{proposition}{\rm(\cite[Proposition~3.2]{v-number})} \label{apr15-02}
If $I$ is a squarefree monomial ideal of $S$, then ${\rm reg}(S/I)\leq {\rm dim}(S/I)$.    
\end{proposition}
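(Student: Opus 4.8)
The plan is to translate the inequality into the combinatorial language of simplicial complexes and then read both invariants off Hochster's formula. Since $I$ is a squarefree monomial ideal, $S/I=K[\Delta]$ is the Stanley--Reisner ring of a simplicial complex $\Delta$ on the vertex set $\{x_1,\dots,x_n\}$, and $\dim(S/I)=\dim\Delta+1$, where $\dim\Delta$ is the largest dimension of a face of $\Delta$. Hochster's formula expresses the graded Betti numbers occurring in the minimal free resolution $\mathbf F$ as
\[
b_{i,j}=\beta_{i,j}(S/I)=\sum_{\substack{W\subseteq\{x_1,\dots,x_n\}\\ |W|=j}}\dim_K\widetilde{H}_{\,j-i-1}(\Delta_W;K),
\]
where $\Delta_W=\{F\in\Delta:F\subseteq W\}$ denotes the restriction of $\Delta$ to $W$.

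The bound is then immediate. Fix $(i,j)$ with $b_{i,j}\neq 0$; by Hochster's formula there is a subset $W$ with $\widetilde{H}_{\,j-i-1}(\Delta_W;K)\neq 0$. Reduced simplicial homology of a complex vanishes in every homological degree exceeding the dimension of that complex, so $j-i-1\leq\dim\Delta_W$. Since every face of $\Delta_W$ is a face of $\Delta$, we have $\dim\Delta_W\leq\dim\Delta$, and therefore $j-i-1\leq\dim\Delta$, i.e. $j-i\leq\dim\Delta+1=\dim(S/I)$. Maximizing over all $(i,j)$ with $b_{i,j}\neq 0$ gives $\operatorname{reg}(S/I)=\max\{j-i:b_{i,j}\neq 0\}\leq\dim(S/I)$.

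I do not anticipate a genuine obstacle: once Hochster's formula is in hand the argument is a single comparison of homological degrees, and the only care required is for the usual degenerate conventions ($\dim\varnothing=-1$, $\widetilde{H}_{-1}(\{\varnothing\};K)=K$ but $\widetilde{H}_{\ell}=0$ for $\ell<-1$, and the trivial case $I=S$). If one prefers to avoid Hochster's formula entirely, the same conclusion follows by induction on $n$ from the short exact sequence
\[
0\longrightarrow \big(S/(I:x_n)\big)(-1)\xrightarrow{\ \cdot x_n\ } S/I\longrightarrow S/(I,x_n)\longrightarrow 0,
\]
using that for squarefree $I$ both $(I:x_n)$ and $(I,x_n)$ are squarefree, that $(I:x_n)$ is extended from $K[x_1,\dots,x_{n-1}]$ (so adjoining $x_n$ raises $\dim$ by $1$ and leaves the regularity unchanged), that $S/(I,x_n)$ lives in $n-1$ variables, and the standard inequalities $\operatorname{reg}(S/I)\leq\max\{\operatorname{reg}(S/(I:x_n))+1,\operatorname{reg}(S/(I,x_n))\}$ and $\dim(S/I)=\max\{\dim(S/(I:x_n)),\dim(S/(I,x_n))\}$. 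One may equally argue via local cohomology, since $H^i_{\mathfrak m}(S/I)$ vanishes for $i>\dim(S/I)$ and, for a Stanley--Reisner ring, is concentrated in non-positive internal degrees.
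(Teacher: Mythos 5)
Your argument is correct. Note that the paper itself gives no proof of this proposition; it is quoted verbatim from \cite[Proposition~3.2]{v-number}, so there is no in-paper argument to compare against. Your derivation from Hochster's formula is a complete and standard proof: the identity $b_{i,j}=\sum_{|W|=j}\dim_K\widetilde{H}_{j-i-1}(\Delta_W;K)$ together with the vanishing of reduced homology above the dimension of $\Delta_W$ and the inclusion $\Delta_W\subseteq\Delta$ gives $j-i\leq\dim\Delta+1=\dim(S/I)$ whenever $b_{i,j}\neq 0$, which is exactly the claim; your handling of the degenerate conventions ($\widetilde{H}_{-1}(\{\varnothing\};K)=K$, contributing only $j-i=0$) is also right. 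The alternative sketches via the exact sequence $0\to (S/(I:x_n))(-1)\to S/I\to S/(I,x_n)\to 0$ or via local cohomology would also work, but the Hochster-formula argument as written already suffices.
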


\quad For our third result we prove for any $t>1$ positive integer the regularity of the ideal is less than or equal to $t-1$ as long as $q=o(\frac{1}{n^{2/(t-1)}})$. The proof of this results follows as a Corollary of Theorem \ref{Theorem: Main resul Krull} combined with Proposition \ref{apr15-02}.

\begin{corollary}\label{Corollary: Main result regularity}
Let $I(\mathcal{G})$ be a random monomial ideal with Erdős–Rényi distribution and let $t>1$ be a positive integer. If $q=o(\frac{1}{n^{2/(t-1)}})$ then $\text{reg}(S/I(\mathcal{G}))\leq t-1$ asymptotically almost surely. In other words
\begin{equation*}
\lim_{n\rightarrow\infty} \PP(\text{reg}(S/I(\mathcal{G}))\leq t-1)=1.
\end{equation*}
\end{corollary}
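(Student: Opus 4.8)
\textbf{Proof proposal for Corollary \ref{Corollary: Main result regularity}.}

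The plan is to chain together the two results already established in the excerpt: Theorem \ref{Theorem: Main resul Krull}, which controls the Krull dimension $\dim(S/I(\mathcal{G}))$, and Proposition \ref{apr15-02}, which says $\mathrm{reg}(S/I)\le\dim(S/I)$ for squarefree monomial ideals. Since $I(\mathcal{G})$ is by construction a squarefree monomial ideal (generated by the degree-two monomials $x_ix_j$ with $\{x_i,x_j\}\in E$), Proposition \ref{apr15-02} applies to every realization $G$ of $\mathcal{G}$. Hence on the event $\{\dim(S/I(\mathcal{G}))\le t-1\}$ we automatically have $\{\mathrm{reg}(S/I(\mathcal{G}))\le t-1\}$, which gives the containment of events
\begin{equation*}
\{\dim(S/I(\mathcal{G}))\le t-1\}\subseteq\{\mathrm{reg}(S/I(\mathcal{G}))\le t-1\},
\end{equation*}
and therefore $\PP(\mathrm{reg}(S/I(\mathcal{G}))\le t-1)\ge\PP(\dim(S/I(\mathcal{G}))\le t-1)$.

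First I would observe that $\dim(S/I(\mathcal{G}))\le t-1$ is exactly the complement of the event $\dim(S/I(\mathcal{G}))\ge t$ appearing in Theorem \ref{Theorem: Main resul Krull}, since the Krull dimension is an integer. Then, under the hypothesis $q=o(1/n^{2/(t-1)})$, Theorem \ref{Theorem: Main resul Krull} gives
\begin{equation*}
\lim_{n\to\infty}\PP(\dim(S/I(\mathcal{G}))\ge t)=0,
\end{equation*}
so that $\lim_{n\to\infty}\PP(\dim(S/I(\mathcal{G}))\le t-1)=1$. Combining this with the event containment from the first paragraph and the squeeze $\PP(\mathrm{reg}\le t-1)\le 1$ yields $\lim_{n\to\infty}\PP(\mathrm{reg}(S/I(\mathcal{G}))\le t-1)=1$, as claimed.

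There is essentially no obstacle here: the corollary is a formal consequence of a monotone-in-$q$ threshold result plus a deterministic inequality valid pointwise on the sample space. The only point requiring a line of care is making sure Proposition \ref{apr15-02} is invoked for the correct object — it is stated for squarefree monomial ideals $I\subset S$, and $I(\mathcal{G})$ qualifies for every graph $G$ on the vertex set $\{x_1,\dots,x_n\}$, so the bound $\mathrm{reg}(S/I(G))\le\dim(S/I(G))$ holds realization-by-realization and can be integrated against the Erdős–Rényi measure without any independence or measurability subtleties. One might also remark, for completeness, that the hypothesis on $q$ here is the same one that forces $\dim(S/I(\mathcal{G}))=t$ together with the stronger lower bound in Corollary \ref{Corollary: Main theorem Krull}, but the regularity statement needs only the one-sided conclusion $\dim\le t-1$ and hence only the one-sided hypothesis $q=o(1/n^{2/(t-1)})$.
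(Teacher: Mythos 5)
Your proposal is correct and follows essentially the same route as the paper: apply the pointwise inequality $\mathrm{reg}(S/I)\leq \dim(S/I)$ from Proposition \ref{apr15-02} to get $\PP(\mathrm{reg}(S/I(\mathcal{G}))\leq t-1)\geq \PP(\dim(S/I(\mathcal{G}))\leq t-1)$, and then invoke Theorem \ref{Theorem: Main resul Krull} to see that the right-hand side tends to $1$ when $q=o(1/n^{2/(t-1)})$. No discrepancies to report.
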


\textit{(C) $v$-number.} The v-{\em number} of an graded ideal $I$, denoted ${\rm v}(I)$, is the following invariant of
$I$ \cite[Corollary~4.7]{min-dis-generalized}:
$$
{\rm v}(I):=\min\{d\geq 0 \mid\exists\, f 
\in S_d \mbox{ and }\mathfrak{p} \in {\rm Ass}(I) \mbox{ with } (I\colon f)
=\mathfrak{p}\}.
$$
\quad 
One can define the v-number of $I$ locally at each associated 
prime $\mathfrak{p}$ of $I$\/:
$$
{\rm v}_{\mathfrak{p}}(I):=\mbox{min}\{d\geq 0\mid \exists\, f\in S_d
\mbox{ with }(I\colon f)=\mathfrak{p}\}.
$$ 

By a result of R. Villarreal and Delio \cite{v-number} the v-{\em number} of the edge ideal of a clutter $\mathcal{C}$ is a lower bound for the independence number of $\mathcal{C}$.

\begin{corollary}{\rm(\cite[Corollary~3.6]{v-number})}
\label{apr16-02}
If $\mathcal{C}$ is a clutter, then 
\begin{center}
${\rm v}(I(\mathcal{C}))\leq \beta_0(\mathcal{C})$.      
\end{center}
\end{corollary}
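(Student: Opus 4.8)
The plan is purely constructive. By the very definition of the $v$-number it suffices to produce a single monomial $f$ together with an associated prime $\mathfrak{p}$ of $I(\mathcal{C})$ such that $(I(\mathcal{C})\colon f)=\mathfrak{p}$ and $\deg f=\beta_0(\mathcal{C})$; then $f$ and $\mathfrak{p}$ are admissible in the definition of ${\rm v}(I(\mathcal{C}))$, forcing ${\rm v}(I(\mathcal{C}))\le \deg f=\beta_0(\mathcal{C})$.

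First I would recall two standard facts. Since $I(\mathcal{C})$ is a squarefree monomial ideal it is radical, so it has no embedded primes and ${\rm Ass}(I(\mathcal{C}))={\rm Min}(I(\mathcal{C}))=\{\mathfrak{p}_C\mid C\text{ a minimal vertex cover of }\mathcal{C}\}$, where $\mathfrak{p}_C=(x_i\colon x_i\in C)$. Next, complementation gives the familiar duality between independent sets and vertex covers: a vertex set $A$ is a maximal independent set of $\mathcal{C}$ if and only if its complement $C=V(\mathcal{C})\setminus A$ is a minimal vertex cover. Accordingly I fix a \emph{maximum} independent set $A$, so $|A|=\beta_0(\mathcal{C})$; being of largest size it is in particular maximal, so $C:=V(\mathcal{C})\setminus A$ is a minimal vertex cover and $\mathfrak{p}:=\mathfrak{p}_C\in{\rm Ass}(I(\mathcal{C}))$.

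Now take $f=x_A:=\prod_{x_i\in A}x_i$, a monomial of degree $\beta_0(\mathcal{C})$, and the key step is to verify $(I(\mathcal{C})\colon f)=\mathfrak{p}_C$. Since $(I(\mathcal{C})\colon f)$ is a monomial ideal it suffices to test monomials. For $\subseteq$: if $m$ is a monomial with $mf\in I(\mathcal{C})$, then $x_e\mid mf$ for some edge $e\in E(\mathcal{C})$; because $A$ is independent, $e\not\subseteq A$, so choose a vertex $x_j\in e\setminus A$; then $x_j\mid x_e\mid mf$ but $x_j\nmid f$, hence $x_j\mid m$ and $m\in\mathfrak{p}_C$. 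For $\supseteq$: given $x_j\in C$, minimality of $C$ says $C\setminus\{x_j\}$ is not a vertex cover, so some edge $e$ avoids it, i.e. $e\subseteq V(\mathcal{C})\setminus(C\setminus\{x_j\})=A\cup\{x_j\}$; therefore $x_e\mid x_j\cdot x_A=x_j f$, so $x_j f\in I(\mathcal{C})$ and $x_j\in(I(\mathcal{C})\colon f)$. This establishes the claim, and with it the corollary.

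I expect the sole point requiring care to be this colon computation, and within it the inclusion $\supseteq$: it is exactly here that \emph{minimality} of the vertex cover $C$ (equivalently, \emph{maximality} of the independent set $A$) is used in an essential way, whereas $\subseteq$ needs only independence of $A$. A few degenerate configurations should be noted in passing --- e.g. $\beta_0(\mathcal{C})=0$, which forces $f=1$ and $I(\mathcal{C})=(x_1,\dots,x_n)$, or vertices belonging to no edge --- but all of them are handled immediately by the same argument. Beyond that the proof is just the standard translation between colon ideals of squarefree monomial ideals and the combinatorics of covers and independent sets.
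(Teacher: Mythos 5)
Your proof is correct, and since the paper only quotes this corollary from \cite{v-number} without proof, there is nothing internal to compare it against; your argument is precisely the standard one behind the cited result. Taking a maximum stable set $A$, noting its complement $C$ is a minimal vertex cover so that $\mathfrak{p}_C\in{\rm Ass}(I(\mathcal{C}))$ by Lemma \ref{jul1-01}, and verifying $(I(\mathcal{C})\colon x_A)=\mathfrak{p}_C$ --- with independence of $A$ giving $\subseteq$ and minimality of $C$ giving $\supseteq$ --- is exactly what is needed, and both inclusions are checked correctly.
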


For our last result we explore the $v$-number of the ideal $I(\mathcal{G})$. As with the regularity, we prove for any $t>1$ integer the $v$-number of the ideal is less than or equal to $t-1$ as long as $q=o(\frac{1}{n^{2/(t-1)}})$. The proof of this results follows as a Corollary of Theorem \ref{Theorem: Main resul Krull} combined with Corollary \ref{apr16-02}.

\begin{corollary}\label{Corollary: Main result v number}
Let $I(\mathcal{G})$ be a random monomial ideal with Erdős–Rényi distribution and let $t>1$ be a positive integer. If $q=o(\frac{1}{n^{2/(t-1)}})$ then $v(I(\mathcal{G}))\leq t-1$ asymptotically almost surely. In other words
\begin{equation*}
\lim_{n\rightarrow\infty} \PP(v(I(\mathcal{G}))\leq t-1)=1.
\end{equation*}
\end{corollary}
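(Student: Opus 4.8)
The plan is to chain together the three facts already available: the threshold statement of Theorem~\ref{Theorem: Main resul Krull}, the identification of the Krull dimension with the independence number in our model, and the inequality $\mathrm{v}(I(\mathcal{C}))\le\beta_0(\mathcal{C})$ of Corollary~\ref{apr16-02}. No new probabilistic estimate is needed; the argument is purely a matter of observing that the relevant events are nested, exactly as in the deduction of Corollary~\ref{Corollary: Main result regularity} from Theorem~\ref{Theorem: Main resul Krull} and Proposition~\ref{apr15-02}.

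First I would record the deterministic inclusion of events. For any realization $G$ of $\mathcal{G}$, the graph $G$ is in particular a clutter (of rank $2$), so Corollary~\ref{apr16-02} applies verbatim to the edge ideal $I(G)$ and yields $\mathrm{v}(I(G))\le\beta_0(G)$. Moreover, as recalled in Section~\ref{Section: model and results} and established in Section~\ref{Section: Monomial ideals}, one has $\dim(S/I(G))=\beta_0(G)$ for edge ideals of graphs. Combining these two facts gives $\mathrm{v}(I(G))\le\dim(S/I(G))$ for every $G$, hence the inclusion of events
\[
\{G:\dim(S/I(G))\le t-1\}\ \subseteq\ \{G:\mathrm{v}(I(G))\le t-1\},
\]
and therefore $\PP(\mathrm{v}(I(\mathcal{G}))\le t-1)\ \ge\ \PP(\dim(S/I(\mathcal{G}))\le t-1)$.

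Next I would invoke Theorem~\ref{Theorem: Main resul Krull}. Under the hypothesis $q=o(1/n^{2/(t-1)})$ — precisely the regime in which that theorem gives the $0$ side of the threshold — we have $\lim_{n\to\infty}\PP(\dim(S/I(\mathcal{G}))\ge t)=0$, equivalently $\lim_{n\to\infty}\PP(\dim(S/I(\mathcal{G}))\le t-1)=1$. Feeding this into the inequality of the previous step, and using that probabilities are bounded above by $1$, forces $\lim_{n\to\infty}\PP(\mathrm{v}(I(\mathcal{G}))\le t-1)=1$, which is the assertion of the corollary.

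There is no real obstacle here; the only point deserving a moment's care is bookkeeping, namely checking that Corollary~\ref{apr16-02} is applied to the correct object (the edge ideal of the simple graph $\mathcal{G}$, viewed as a clutter) and that the identity $\dim(S/I(G))=\beta_0(G)$ is the graph-edge-ideal statement rather than anything about cover ideals. Everything else is monotonicity of probability along a chain of inclusions.
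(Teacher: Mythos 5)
Your argument is correct and is essentially identical to the paper's own proof: both chain $\mathrm{v}(I(\mathcal{G}))\le\beta_0(\mathcal{G})=\dim(S/I(\mathcal{G}))$ into the event inclusion $\{\dim(S/I(\mathcal{G}))\le t-1\}\subseteq\{\mathrm{v}(I(\mathcal{G}))\le t-1\}$ and then apply the $q=o(1/n^{2/(t-1)})$ side of Theorem~\ref{Theorem: Main resul Krull} to send the lower bound to $1$. Nothing is missing.
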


\subsection{proof strategy.} Let us describe our proof techniques. In Theorem \ref{Theorem: Main theorem normality 1} we use the equality of events
$$\{\mathcal{G}\text{ is not normal}\}=\{\mathcal{G}\text{ has a Hochster configuration}\}.$$
Roughly speaking a Hochster configuration consist of two cycles of odd size whose vertices have no edges in between. We then focus our attention into the probability of the second event for which we obtain upper and lower bounds using the following inclusion of events.
\begin{multline*}
\{\mathcal{G}\text{ has }T\text{ as induced subgraph}\}\subset \{\mathcal{G}\text{ has a Hochster configuration}\} \\
\subset \{\mathcal{G}\text{ has a cycle}\}.
\end{multline*}
Here $T$ is the smallest Hochster configuration consisting on two disconnected triangles. To prove our limit theorems we are reduced to prove
$$\PP(\{\mathcal{G}\text{ has }T\text{ as induced subgraph}\})\rightarrow 1,$$
provided $pq^{3/2}=\omega(\frac{1}{n})$ and 
$$\PP(\{\mathcal{G}\text{ has a cycle}\})\rightarrow 0,$$
provided $p=o(\frac{1}{n})$. The second limit is well know however we provide a proof (Proposition \ref{Proposition: having a cycle}) for completeness. The first limit can be computed providing lower bounds for $\PP(\{\mathcal{G}\text{ has }T\text{ as induced subgraph}\})$. The lower bound we provide is of the form
$$\PP(\{\mathcal{G}\text{ has }T\text{ as induced subgraph}\})\geq 1-\frac{Var(Y_{T}(n,p))}{\mathbb{E}(Y_T(n,p))^2},$$
where $Y_T(n,p)$ is the random variable that counts how many times $T$ appears as an induced subgraph of $\mathcal{G}$. We compute upper bounds for the variance of $Y_T(n,p)$ from where we are able to provide an explicit lower bound for $\PP(\{\mathcal{G}\text{ has }T\text{ as induced subgraph}\})$ which we prove converges to $1$ as $n\rightarrow\infty$ provided $pq^{3/2}=\omega(\frac{1}{n})$.

To prove Theorem \ref{Theorem: Main theorem normality 3} we use the inclusion of events

$$\{\mathcal{G}\text{ has no cycles}\}\subset \{\mathcal{G}\text{ is bipartite}\}\subset \{I_c(\mathcal{G})\text{ is normal}\}$$

where we use that the vertex cover ideals of bipartite graphs are normal because they are perfect graphs (see for instance \cite{RAPC}). The limit of the probability of the event on the left is $1$ provided $p=o(\frac{1}{n})$ (Proposition \ref{Proposition: having a cycle}) which forces the probability of the event on the right to converge to $1$.
To prove Theorem \ref{Theorem: Main theorem normality 2} we use Duality criterion \cite[Theorem~5.11]{icdual} which give the inclusion of sets 
\begin{multline*}
\{I_c(\mathcal{G})\text{ is not normal},\beta_0(\mathcal{G})\leq 2\} \\
\subset \{\bar{\mathcal{G}}\text{ has a Hochster configuration}\} \subset
\{\bar{\mathcal{G}}\text{ has a cycle}\}.
\end{multline*}
Here $\bar{\mathcal{G}}$ is the complement of the graph $\mathcal{G}$ defined as the graph such that $\mathcal{G}\cup \bar{\mathcal{G}}$ is the complete graph. Then we are reduced to prove $\PP(\bar{\mathcal{G}}\text{ has a cycle})\rightarrow 0$ which follows as in Theorem \ref{Theorem: Main theorem normality 1}. 

To prove Theorem \ref{Theorem: Main resul Krull} we use the equality of events
$$\{dim(S/I(\mathcal{G}))\geq t\}=\{\mathcal{G}\text{ has }E_t\text{ as induced subgraph}\}.$$
Here $E_t$ is the empty graph (no edges) with $t$ vertices. The rest of the proof consists in proving that $q=\frac{1}{n^{2/(t-1)}}$ is a threshold function for the property $\mathcal{G}$ has $E_t$ as induced subgraph. To prove this it is enough to upper and lower bound the probability of the event
$$\{\mathcal{G}\text{ has }E_t\text{ as an induced subgraph}\}.$$
This event is the same as $\{Y_{E_t}(n,p)>0\}$ whose probability is upper bounded by $\mathbb{E}(Y_{E_t}(n,p))$ and lower bounded by
$$1-\frac{Var_{E_t}(n,p)}{\mathbb{E}(Y_{E_t}(n,p))^2}.$$
Here $Y_{E_t}(n,p)$ is the random variable that counts how many times $E_t$ appears as induced subgraph of $\mathcal{G}$. We find explicit expression for the expectation and bounds for the variance from where we conclude that these bounds converge to $0$ and $1$ respectively if $q=o(\frac{1}{n^{2/(t-1)}})$ and $q=\omega(\frac{1}{n^{2/(t-1)}})$ respectively.

Finally Corollaries \ref{Corollary: Main theorem Krull}, \ref{Corollary: Main result regularity} and \ref{Corollary: Main result v number} follow as a consequence of Theorem \ref{Theorem: Main resul Krull}.

\subsection{Organization of the paper.} Besides Sections \ref{Section: introduction} and \ref{Section: model and results} where we provide an introduction, introduce our model and present our results, the paper is organized as follows. In Section \ref{Section: Monomial ideals} we give a brief introduction to the algebra setting needed for this paper. We introduce the set of monomial ideals generated by graphs and elaborate on some of their properties. In Section \ref{Section: Erdos Renyi graph} we introduce the Erdős–Rényi model. This Section is completely devoted to this model where we explore some threshold functions and find explicit probabilities for some events. Finally, in Section \ref{Section: Proof of main results} we prove our results with the aid of the results previously obtained in Sections \ref{Section: Monomial ideals} and \ref{Section: Erdos Renyi graph}.

\section{Monomial ideals}\label{Section: Monomial ideals}
Let $S=K[x_1,\ldots,x_n]$ be a polynomial ring over
a field $K$ and let $\mathcal{C}$ be a
\textit{clutter} with vertex 
set $V(\mathcal{C})=\{x_1,\ldots,x_n\}$, that is, $\mathcal{C}$ is a family of subsets of
$V(\mathcal{C})$, called \textit{edges}, none of which is included in
another. The set of edges of $\mathcal C$ is denoted by
$E(\mathcal{C})$. The primer example of a clutter is a simple graph
$G$. The monomials of $S$ are denoted 
\begin{center}
$x^a:=x_1^{a_1}\cdots x_n^{a_n}$, {\rm  } $a=(a_1,\dots,a_n)$.
\end{center}

A subset $A$ of $V(\mathcal{C})$ is called 
{\it independent\/} or {\it
stable\/} if $e\not\subset A$ for any  
$e\in E(\mathcal{C})$. The dual concept of a stable vertex set
is a {\it vertex cover\/}, i.e., a subset $C$ of $V(\mathcal{C})$ is a vertex
cover if and only if $V(\mathcal{C})\setminus C$ is a stable vertex set. A 
{\it minimal vertex cover\/} is a vertex cover which
is minimal with respect to inclusion.  
If $A$ is a stable set of vertices of $\mathcal{C}$, the \textit{neighbor set} of $A$, denoted 
$N_\mathcal{C}(A)$, is the set of all vertices $x_i$ 
such that $\{x_i\}\cup A$ contains an edge of $\mathcal{C}$. 
 The number of
vertices in any smallest vertex cover of $\mathcal{C}$, denoted 
$\alpha_0({\mathcal C})$, is called the \textit{vertex covering
number} of $\mathcal{C}$. The \textit{independence
number} of $\mathcal{C}$, denoted by $\beta_0(\mathcal{C})$,
is the number 
of vertices in 
any largest stable set of vertices of $\mathcal{C}$. 
The Krull dimension of $S/I(\mathcal{C})$,
denoted $\dim(S/I(\mathcal{C}))$, is equal to $\beta_0(\mathcal{C})$
and the height of $I(\mathcal{C})$, denoted ${\rm ht}(I(
\mathcal{C}))$, is equal to $\alpha_0(C)$.

\quad Let $C_1,\ldots,C_r$ be the minimal vertex covers
of a graph, $G$, and let $x_{c_1},\ldots,x_{c_r}$ be their corresponding
monomials, that is, $x_{c_j}=\prod_{x_i\in C_j}x_i$ for
$j=1,\ldots,r$.
The following squarefree monomial ideals are associated to $G$.

\begin{definition}\label{Definition: The edge and ideal of covers}
Let $G=(V,E)$ be a graph whose vertices are indexed by the variables $x_1,\dots,x_n$. The \textit{edge ideal} of $G$, denoted $I(G)$, is the
ideal \cite{cmg}:
$$
I(G):=(\{x_ix_j\mid \{x_i,x_j\}\in E(G)\})\subset S.
$$
The \textit{ideal of
covers} of $G$, denoted $I_c(G)$, is the ideal \cite{alexdual}:  
$$
I_c(G):=(x_{c_1},\ldots,x_{c_r})\subset S.
$$
\end{definition}
Any squarefree monomial ideal is
the edge ideal of a clutter \cite[pp.~220--221]{monalg-rev}.

\begin{example}\label{Example: ideal of edges and convers}
Let $G$ be the graph of figure \ref{figure6}. The \textit{edge ideal} of $G$, is the
ideal:
$$
I(G)=(x_1x_2,x_2x_3,x_3x_4,x_4x_5,x_5x_1,x_2x_5,x_3x_5)
$$
The \textit{ideal of covers} of $G$, is the ideal:  
$$
I_c(G)=(x_1x_2x_3x_4,x_1x_3x_5,x_2x_3x_5,x_2x_4x_5).
$$
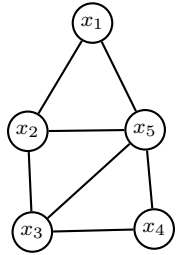
\begin{figure}[ht]
\begin{tikzpicture}[scale=2,thick]
		\tikzstyle{every node}=[minimum width=0pt, inner
		sep=1.8pt, circle]
			\draw (-0.24,1.47) node[draw] (1) { \tiny $x_1$};
			\draw (-0.67,0.75) node[draw] (2) { \tiny $x_2$};
			\draw (-0.64,0.08) node[draw] (3) { \tiny $x_3$};
			\draw (0.17,0.1) node[draw] (4) { \tiny $x_4$};
            \draw (0.11,0.76) node[draw] (5) { \tiny $x_5$};
	
			\draw  (1) edge (2);
			\draw  (3) edge (2);
			\draw  (3) edge (4);
			\draw  (5) edge (4);
			\draw  (5) edge (1);
			\draw  (2) edge (5);
			\draw  (3) edge (5);
		\end{tikzpicture}
\caption{The graph of example \ref{Example: ideal of edges and convers}.}\label{figure6}
\end{figure}
\end{example}


A prime ideal $\mathfrak{p}$ of $S$ is an \textit{associated prime}
of $I$ if
$(I\colon f)=\mathfrak{p}$ for some $f\in S_d$, where $(I\colon f)$ is
the set of all $g\in S$ such that $gf\in I$. The set of associated primes of $I$ 
is denoted by ${\rm Ass}(I)$. 
The following result gives an important relation between the minimal vertex cover of a clutter and its associated primes of $I(\mathcal{C})$.
\begin{lemma}\cite[Lemma~6.3.37]{monalg-rev}\label{jul1-01} Let $C$
be a set of vertices of a clutter
$\mathcal{C}$. Then, $C$ is a minimal vertex cover of $\mathcal{C}$
if and only if the ideal of $S$ generated by $C$ 
is an associated prime of $I(\mathcal{C})$.
\end{lemma}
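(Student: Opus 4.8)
The plan is to prove both implications separately: the forward direction ($C$ a minimal vertex cover $\Rightarrow$ the ideal $\mathfrak{p}_C:=(x_i:x_i\in C)$ generated by $C$ is an associated prime) by exhibiting an explicit colon witness, and the reverse direction by appealing to the structure of associated primes of a radical monomial ideal.

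For the forward direction I would suppose $C$ is a minimal vertex cover and set $f=\prod_{x_i\notin C}x_i$, the squarefree monomial supported on the complementary stable set $V(\mathcal{C})\setminus C$; note $f\notin I(\mathcal{C})$ since its support is stable. I then verify the two inclusions that together give $(I(\mathcal{C})\colon f)=\mathfrak{p}_C$. For $\mathfrak{p}_C\subseteq(I(\mathcal{C})\colon f)$, fix $x_j\in C$; minimality of $C$ means $C\setminus\{x_j\}$ is not a cover, so some edge $e$ avoids $C\setminus\{x_j\}$, and since $C$ is a cover $e$ must meet $C$, forcing $x_j\in e$ and $e\subseteq\{x_j\}\cup(V(\mathcal{C})\setminus C)$. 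Hence the generator $x_e$ divides $x_jf$, so $x_j\in(I(\mathcal{C})\colon f)$, and therefore $\mathfrak{p}_C\subseteq(I(\mathcal{C})\colon f)$. For the reverse inclusion, if a monomial $x^a$ satisfies $x^af\in I(\mathcal{C})$ then some edge generator $x_e$ divides $x^af$; as $V(\mathcal{C})\setminus C$ is stable, $e$ cannot be contained in the support of $f$, so $e$ contains a vertex $x_k\in C$, and since $x_k\nmid f$ we get $x_k\mid x^a$, i.e. $x^a\in\mathfrak{p}_C$. This establishes $(I(\mathcal{C})\colon f)=\mathfrak{p}_C$, so $\mathfrak{p}_C\in\mathrm{Ass}(I(\mathcal{C}))$.

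For the reverse direction I would use that $I(\mathcal{C})$ is squarefree, hence radical, so it has no embedded primes and $\mathrm{Ass}(I(\mathcal{C}))$ coincides with the set of minimal primes over $I(\mathcal{C})$. Every such prime of a monomial ideal is a monomial prime $\mathfrak{p}_A=(x_i:x_i\in A)$, and $\mathfrak{p}_A\supseteq I(\mathcal{C})$ precisely when every edge generator lies in $\mathfrak{p}_A$, i.e. when each edge meets $A$ --- that is, when $A$ is a vertex cover. Consequently $\mathfrak{p}_A$ is minimal over $I(\mathcal{C})$ exactly when $A$ is minimal among vertex covers, since $A\subseteq A'$ gives $\mathfrak{p}_A\subseteq\mathfrak{p}_{A'}$. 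Thus if the ideal $\mathfrak{p}_C$ generated by $C$ is associated, it is minimal over $I(\mathcal{C})$, which forces $C$ to be a minimal vertex cover.

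The main obstacle is the forward direction, where minimality of $C$ is used in an essential way: it is precisely what produces, for each $x_j\in C$, an edge $e\subseteq\{x_j\}\cup(V(\mathcal{C})\setminus C)$ witnessing $x_jf\in I(\mathcal{C})$. Without minimality one obtains only $\mathfrak{p}_C\supseteq I(\mathcal{C})$, not the equality $(I(\mathcal{C})\colon f)=\mathfrak{p}_C$. In the reverse direction the key input is the standard structural fact that radical monomial ideals have only monomial minimal primes and no embedded components, which cleanly identifies $\mathrm{Ass}(I(\mathcal{C}))$ with the family of minimal vertex covers.
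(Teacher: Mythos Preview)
The paper does not actually prove this lemma; it is quoted verbatim from \cite[Lemma~6.3.37]{monalg-rev} and used as a black box, so there is no in-paper argument to compare against. Your proof is correct and is in fact the standard argument one finds in the literature: exhibit the complementary squarefree monomial $f=\prod_{x_i\notin C}x_i$ as a colon witness for the forward direction, and invoke the radical/no-embedded-primes structure of squarefree monomial ideals for the converse. Both directions are handled cleanly, and your use of minimality of $C$ to manufacture the edge $e\subseteq\{x_j\}\cup(V(\mathcal{C})\setminus C)$ is exactly the point where the hypothesis is needed.
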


A \textit{Hochster configuration} of a graph $G$ consists of two odd
cycles $C\sb{1}$, $C\sb{2}$ of $G$ satisfying the following
two conditions:
\begin{enumerate}
\item[(i)] 
$C\sb {1}\cap N_G(C\sb{2})=\emptyset$, where $N_G(C_2)$
is the neighbor set of $C_2$.  
\item[(ii)] No chord of $C\sb {i}$, $i=1,2$, is
an edge of $G$, i.e., $C_i$ is an induced cycle of $G$.
\end{enumerate}

An example of a Hochster configuration can be seen in Figure \ref{figure1}. 

\begin{figure}[ht]
\begin{tikzpicture}[scale=1.5,thick]
		\tikzstyle{every node}=[minimum width=0pt, inner
		sep=1.8pt, circle]
			\draw (-2,1) node[draw] (1) { \tiny $x_1$};
			\draw (-2,-1) node[draw] (2) { \tiny $x_2$};
			\draw (-1,0) node[draw] (3) { \tiny $x_3$};
			\draw (0,0) node[draw] (4) { \tiny $x_4$};
			\draw (1,0) node[draw] (5) { \tiny $x_5$};
			\draw (2,-1) node[draw] (6) { \tiny $x_6$};
			\draw (2,1) node[draw] (7) { \tiny $x_7$};
            
			\draw  (3) edge (1);
			\draw  (1) edge (2);
			\draw  (2) edge (3);
			\draw  (3) edge (4);
			\draw  (4) edge (5);
			\draw  (5) edge (6);
			\draw  (6) edge (7);
			\draw  (7) edge (5);
		\end{tikzpicture}
\caption{A Hochster configuration.}\label{figure1}
\end{figure}
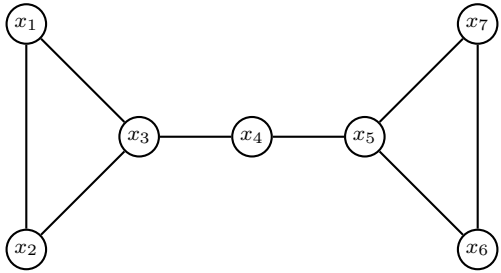

The following result gives a combinatorial description of the normality of the edge ideal.
\begin{theorem}{\rm(\cite[Corollary~5.8.10]{graphs})}
\label{apr16-03} 
The edge ideal $I(G)$ of a 
graph $G$ is
normal if and only if $G$ admits no Hochster configurations.
\end{theorem}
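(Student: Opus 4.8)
The plan is to reduce the normality of $I=I(G)$ to a statement about lattice points in a rational polyhedron, reinterpret that statement in terms of fractional versus integral matchings, and then show that the only configurations that obstruct normality are pairs of disjoint induced odd cycles with no edges between them. For the reduction: it is standard that $I$ is normal if and only if its Rees algebra $\mathcal{R}(I)=\bigoplus_{b\ge 0}I^bt^b$ is integrally closed, and since $\mathcal{R}(I)$ is a monomial subalgebra of $S[t]$, its integral closure is controlled by the real cone spanned by the exponents of the generators. Concretely, for a monomial ideal one has $\overline{I^b}=(\{x^a\colon a\in b\cdot\mathrm{NP}(I)\cap\mathbb{Z}^n\})$, where $\mathrm{NP}(I)=\mathrm{conv}\{e_i+e_j\colon\{x_i,x_j\}\in E\}+\mathbb{R}_{\ge 0}^n$ is the Newton polyhedron and $e_1,\dots,e_n$ is the standard basis. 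Hence $I$ fails to be normal exactly when there are $b\ge 1$ and $a\in\mathbb{Z}_{\ge 0}^n$ with $a\in b\cdot\mathrm{NP}(I)$ but $x^a\notin I^b$. Unravelling, $a\in b\cdot\mathrm{NP}(I)$ says there are edge weights $\mu_e\ge 0$ with $\sum_e\mu_e=b$ and $\sum_{e\ni x_i}\mu_e\le a_i$ for every $i$ (a fractional matching of weight $b$ with vertex capacities $a$), while $x^a\in I^b$ says the same with the $\mu_e$ integral. So $I$ is not normal precisely when some capacity vector admits a fractional matching whose weight reaches an integer $b$ that strictly exceeds the maximum weight of an integral matching under those capacities.

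For the ``Hochster configuration implies not normal'' direction I would argue directly. Given a Hochster configuration with odd cycles $C_1,C_2$ of lengths $2m_1+1$ and $2m_2+1$, set $a=\sum_{x_j\in C_1\cup C_2}e_j$ and $b=m_1+m_2+1$. Placing weight $\frac12$ on each of the $2b$ cyclic edges gives a fractional matching of weight exactly $b$, so $x^a\in\overline{I^b}$; but conditions (i)--(ii) force every edge of $G$ meeting $\mathrm{supp}(a)$ to lie inside $C_1$ or inside $C_2$, and $C_1\sqcup C_2$ has matching number $m_1+m_2=b-1$, so no integral matching of weight $b$ exists under $a$ and $x^a\notin I^b$. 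Thus $I$ is not normal.

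For the converse, I would take an obstruction $(a,b)$ with $b$ minimal and then with $\sum_i a_i$ minimal, and show it must come from a Hochster configuration. A minimality argument reduces $a$ to the indicator vector of a vertex set $W$ (a vertex of capacity $\ge 2$ leaves enough slack to round a fractional matching to an integral one of the same weight), so we may replace $G$ by $G[W]$ and are left with a graph whose fractional matching number is at least $b$ while its integral matching number is at most $b-1$. By Edmonds' description of the matching polytope, such a gap is produced only by odd-set (blossom) inequalities; equivalently, the optimal fractional matching may be taken half-integral, and after discarding its integral part (again by minimality of $\sum_i a_i$) its support is a disjoint union of odd cycles of $G[W]$. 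Minimality of $b$ then forces exactly two such cycles $C_1,C_2$: a single odd cycle cannot push the fractional matching weight up to an integer above its matching number, and three or more odd cycles would already contain a strictly smaller obstruction. Finally, the obstruction property itself forbids any edge of $G$ between $C_1$ and $C_2$ (such an edge would raise the integral matching number to $b$), and minimality of $\sum_i a_i$ forces each $C_i$ to be induced and $W=V(C_1)\cup V(C_2)$ — that is, a Hochster configuration.

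The main obstacle is concentrated in the converse: verifying that the capacity vector can be taken $\{0,1\}$-valued, and then extracting from the matching gap the precise structure of two disjoint induced odd cycles. This is exactly where the polyhedral combinatorics of matchings has to be used carefully — Edmonds' theorem and the half-integrality of the fractional matching polytope to produce odd cycles at all, and the two minimality choices to rule out the ``one cycle'' and ``three or more cycles'' cases and to strip off the irrelevant matched and uncovered vertices. The $\Rightarrow$ (easy) direction is a one-line construction once the Newton polyhedron reformulation is in place.
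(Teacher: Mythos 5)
This theorem is quoted by the paper from the literature (Gitler--Villarreal, Cor.~5.8.10; the result goes back to Simis--Vasconcelos--Villarreal), so there is no in-paper proof to compare against. Your strategy is in any case the standard one: describe $\overline{I^b}$ via the Newton polyhedron, translate membership in $b\cdot\mathrm{NP}(I)$ versus membership in $I^b$ into fractional versus integral capacitated matchings, and extract two non-adjacent odd cycles from a half-integral optimum. Your forward direction (Hochster configuration $\Rightarrow$ not normal) is complete and correct as written.

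The converse is where the real content lies, as you say, but the specific minimality devices you propose have genuine gaps. (1) The reduction to $\{0,1\}$-valued capacities: the parenthetical ``a vertex of capacity $\geq 2$ leaves enough slack to round'' is not an argument, and it is not clear that lowering a coordinate of $a$ preserves the obstruction (the vector $a-e_i$ need not lie in $b\cdot\mathrm{NP}(I)$, so minimality of $\sum_i a_i$ gives you nothing directly). The reduction is also unnecessary: the fractional $b$-matching polytope $\{\mu\geq 0:\ A\mu\leq a\}$ has a half-integral optimum whose half-valued edges form vertex-disjoint odd cycles for \emph{arbitrary} integral $a$, not just $a=\mathbf{1}$. (2) ``Discarding the integral part by minimality of $\sum_i a_i$'' does not typecheck: removing $\mu^{\mathrm{int}}$ from $\mu=\mu^{\mathrm{int}}+\tfrac12\chi_{C_1\cup\cdots\cup C_k}$ changes $b$ to $\tfrac12\sum_i|C_i|$, which is not an integer when $k$ is odd, so the residual pair is not an obstruction in your sense. (3) ``Three or more odd cycles would already contain a strictly smaller obstruction'' is circular: a sub-pair of the cycles yields an obstruction only if that pair is non-adjacent, which is exactly what you are trying to produce. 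The correct deduction replaces all three steps by a rounding argument: from the half-integral optimum one gets an integral matching of weight at least $\nu_f(G,a)-\tfrac{k}{2}$ by taking near-perfect matchings of each $C_i$; if moreover two cycles $C_i,C_j$ are joined by an edge $uv$ of $G$, that pair can be rounded with \emph{no} loss (near-perfect matchings of $C_i$ and $C_j$ missing $u$ and $v$, plus the edge $uv$). Hence if $k\leq 1$, or if every pair of the $k$ cycles is joined by an edge (pair them up; at most one leftover cycle costs $\tfrac12$), the integral matching has weight $\geq \nu_f(G,a)-\tfrac12\geq b-\tfrac12$, hence $\geq b$, contradicting the obstruction. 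Therefore $k\geq 2$ and some pair $C_i,C_j$ has no edge of $G$ between them; passing to chordless odd subcycles (an odd cycle with a chord contains a shorter odd cycle) yields the Hochster configuration. With this substitution your outline becomes a correct proof; as written, the two minimality reductions and the ``exactly two cycles'' claim do not go through.
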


\begin{remark}
With the help of Theorem \ref{apr16-03} we will be able to find the probability of the event
$$\{I(\mathcal{G})\text{ is not normal}\}$$
by looking at the probability of the event
$$\{\mathcal{G}\text{ has a Hochster configuration}\}.$$
We will be able to lower and upper bound this probability with the probability of the events
$$\{\mathcal{G}\text{ has }T\text{ as induced subgraph}\},$$
and 
$$\{\mathcal{G}\text{ has a cycle}\},$$
respectively. Here $T$ is a graph that consist of two disconnected triangles. Then we will prove the probability of these events converge to $0$ and $1$ respectively under suitable conditions for $p$ and $q$.
\end{remark}

The complement of a graph $G$, denoted by $\overline{G}$, is the graph with the same set of vertices and such that two distinct vertices of $\overline{G}$ are adjacent if and only if they are not adjacent in $G$. An example of a graph and its complement can be seen in Figure \ref{figure3}.

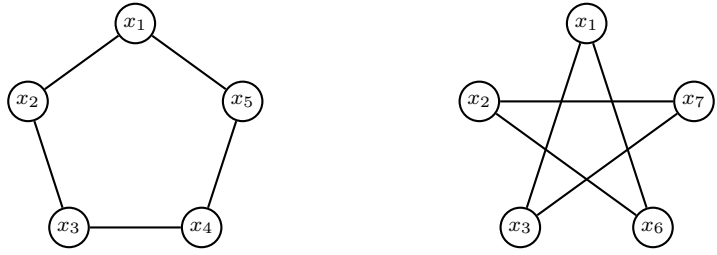
\begin{figure}[ht]
\begin{tikzpicture}[scale=1.5,thick]
		\tikzstyle{every node}=[minimum width=0pt, inner
		sep=1.8pt, circle]
			\draw (-2,1) node[draw] (1) { \tiny $x_1$};
			\draw (-2.9511,0.3090) node[draw] (2) { \tiny $x_2$};
			\draw (-2.5878,-0.8090) node[draw] (3) { \tiny $x_3$};
            \draw (-1.4122,-0.8090) node[draw] (4) { \tiny $x_4$};
			\draw (-1.0489,0.3090) node[draw] (5) { \tiny $x_5$};
            
			\draw  (1) edge (2);
			\draw  (2) edge (3);
			\draw  (3) edge (4);
			\draw  (4) edge (5);
			\draw  (5) edge (1);

            \draw (2,1) node[draw] (11) { \tiny $x_1$};
			\draw (1.0489,0.3090) node[draw] (12) { \tiny $x_2$};
			\draw (1.4122,-0.8090) node[draw] (13) { \tiny $x_3$};
            \draw (2.5878,-0.8090) node[draw] (14) { \tiny $x_6$};
			\draw (2.9511,0.3090) node[draw] (15) { \tiny $x_7$};
            
			\draw  (11) edge (13);
			\draw  (11) edge (14);
			\draw  (15) edge (12);
			\draw  (15) edge (13);
			\draw  (14) edge (12);

		\end{tikzpicture}
\caption{The graph $C_5$ (left) and its complement $\overline{G}$ (right).}\label{figure3}
\end{figure} 

The following result \cite{icdual} gives a combinatorial description of 
the normality of the ideal of covers of graphs 
with independence number at most two.

\begin{theorem}{\rm(\cite[Theorem~5.11]{icdual})}(Duality
criterion) \label{Duality criterion}
Let $G$ be a graph with $\beta_0(G)\leq 2$. The
following hold.
\begin{enumerate}
\item[(a)] $I_c(G)$ is normal if and only if $I(\overline{G})$ is normal. 
\item[(b)] $I_c(G)$ is normal if and only if $\overline{G}$ has no
Hochster configurations.
\end{enumerate}    
\end{theorem}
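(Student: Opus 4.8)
The plan is to deduce everything from part (a), and to prove (a) by a chain of equivalences that sends normality of each ideal to one and the same ``rounding'' property of the graph $\overline G$. Part (b) then follows at once: by Theorem \ref{apr16-03} applied to $\overline G$, the statement ``$\overline G$ has no Hochster configuration'' is equivalent to ``$I(\overline G)$ is normal'', which by (a) is equivalent to ``$I_c(G)$ is normal''. (One direction of (a) also has a direct proof: if $\overline G$ contains an induced Hochster configuration $C_1\sqcup C_2$ with $|C_1|=2s+1$, $|C_2|=2t+1$, put $N=s+t+1$, $x_V=x_1\cdots x_n$, and $M=x_V^{\,N}\big/\!\prod_{x_i\in C_1\cup C_2}x_i$; grouping the $2N$ edges of $C_1\sqcup C_2$, each edge $e=\{x_i,x_j\}$ contributing the element $x_V/x_e\in I_c(G)$ because $\{x_i,x_j\}$ is independent in $G$, yields $M^2\in I_c(G)^{2N}$, so $M\in\overline{I_c(G)^{\,N}}$; yet $M\notin I_c(G)^{\,N}$, since writing $M$ as a product of $N$ cover monomials would force a perfect matching of the two odd cycles. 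Hence $I(\overline G)$ not normal $\Rightarrow$ $\overline G$ has a Hochster configuration $\Rightarrow$ $I_c(G)$ not normal.)

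For part (a) proper, first I would reduce to the case that $\overline G$ has no isolated vertex. A vertex $w$ isolated in $\overline G$ is exactly a vertex of $G$ adjacent to all others; deleting it preserves $\beta_0(G)\le2$, preserves normality of $I(\overline G)$ (the variable $x_w$ does not occur there), and --- via the identity $I_c(G)=(x_V/x_w)+x_w\,I_c(G-w)$ coming from the list of minimal vertex covers of $G$ --- preserves normality of $I_c(G)$ as well, by the same Newton-polyhedron computation used below. So assume $\overline G=:H$ has no isolated vertex. The hypothesis $\beta_0(G)\le2$ is precisely that $H$ is triangle-free (a triangle of $H$ is a three-element independent set of $G$, and conversely), and for such $H$ the maximal cliques of $H$ are exactly its edges; hence the maximal independent sets of $G$ are exactly the edges of $H$, so
\[
I_c(G)=\bigl(x_V/x_e \;:\; e\in E(H)\bigr),\qquad x_e:=x_ix_j \text{ for } e=\{x_i,x_j\},
\]
while $I(H)=(x_e : e\in E(H))$.

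Next I would translate normality of each ideal into combinatorial optimization on $H$, using that for a monomial ideal $J=(x^{v_1},\dots,x^{v_q})$ one has $x^c\in J^k$ iff $c\ge v_{i_1}+\dots+v_{i_k}$ for some $k$ of the generators, while $x^c\in\overline{J^k}$ iff $c$ lies in $k$ times the Newton polyhedron of $J$. Writing $\chi_e$ for the incidence vector of an edge $e$ and $\mathbf 1=(1,\dots,1)$, and unwinding these descriptions: on one side, $x^a\in I(H)^k\iff\nu(a)\ge k$ and $x^a\in\overline{I(H)^k}\iff\nu^{\ast}(a)\ge k$, where $\nu(a)$ (resp.\ $\nu^{\ast}(a)$) is the maximum size of an integral (resp.\ nonnegative-real) multiset of edges of $H$ whose vector of vertex-degrees is $\le a$; hence $I(H)$ is normal $\iff \nu(a)=\lfloor\nu^{\ast}(a)\rfloor$ for all $a\in\mathbb{Z}^{n}_{\ge0}$. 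On the other side, since $x_V/x_e$ has exponent $\mathbf 1-\chi_e$, a short computation gives $x^b\in I_c(G)^k\iff\rho(d)\le k$ and $x^b\in\overline{I_c(G)^k}\iff\rho^{\ast}(d)\le k$, where $d_i=\max(k-b_i,0)$ and $\rho(d)$ (resp.\ $\rho^{\ast}(d)$) is the minimum size of an integral (resp.\ nonnegative-real) multiset of edges of $H$ whose vector of vertex-degrees is $\ge d$; letting the pair $(k,b)$ vary, and using $\rho^{\ast}(d)\ge\max_i d_i$, this gives $I_c(G)$ is normal $\iff \rho(d)=\lceil\rho^{\ast}(d)\rceil$ for all $d\in\mathbb{Z}^{n}_{\ge0}$.

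Finally, since $H$ has no isolated vertex, the Gallai-type identities $\nu(a)+\rho(a)=\sum_i a_i$ (the capacitated form of the classical identity $\nu(H)+\rho(H)=|V(H)|$ relating matchings and edge covers) and $\nu^{\ast}(a)+\rho^{\ast}(a)=\sum_i a_i$ (by linear-programming duality, both sides being $\sum_i a_i$ minus the optimum of the fractional vertex-cover LP of $H$ weighted by $a$) hold for every $a$. Substituting them shows that the condition ``$\nu(a)=\lfloor\nu^{\ast}(a)\rfloor$ for all $a$'' and the condition ``$\rho(a)=\lceil\rho^{\ast}(a)\rceil$ for all $a$'' are literally the same; hence $I(\overline G)$ is normal iff $I_c(G)$ is normal, proving (a), and (b) follows as above. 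The step I expect to be the real obstacle is this translation together with the two Gallai identities --- pinning down the exact dictionary between lattice points of the two Newton polyhedra and integral/fractional degree-constrained edge multisets of $\overline G$, and recognizing that the round-down condition controlling normality of $I(\overline G)$ becomes, under the Gallai identities, the round-up condition controlling normality of $I_c(G)$. The reduction to the isolated-vertex-free case and the description of the generators of $I_c(G)$ under $\beta_0(G)\le2$ are routine by comparison.
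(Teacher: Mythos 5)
The paper does not actually prove this statement: it is imported verbatim from \cite[Theorem~5.11]{icdual} and used as a black box in the proof of Theorem \ref{Theorem: Main theorem normality 2}. So your argument is a genuinely independent, self-contained route, and its core checks out. Under $\beta_0(G)\le 2$, and once $H=\overline G$ has no isolated vertex, $H$ is triangle-free, its maximal cliques are its edges, and $I_c(G)=(x_V/x_e:e\in E(H))$ while $I(H)=(x_e:e\in E(H))$; the standard membership criteria for $I^k$ and $\overline{I^k}$ (the latter via the Newton polyhedron) do translate normality of $I(H)$ into $\nu(a)=\lfloor\nu^*(a)\rfloor$ for all $a$ and normality of $I_c(G)$ into $\rho(d)=\lceil\rho^*(d)\rceil$ for all $d$ (the inequality $\rho^*(d)\ge\max_i d_i$ is exactly what lets $k$ reach $\lceil\rho^*(d)\rceil$ in the second translation); and the capacitated Gallai identities $\nu(a)+\rho(a)=\nu^*(a)+\rho^*(a)=\sum_i a_i$ are valid for graphs without isolated vertices (pad a maximum $a$-matching with one incident edge per unit of deficiency, and trim an $a$-edge-cover one excess unit at a time; the fractional case follows by the same argument or by the substitution $y\mapsto \mathbf 1-y$ in the covering LP, where the absence of isolated vertices is again what forces $y\ge 0$). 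Substituting the identities makes the two rounding conditions literally coincide, proving (a); (b) follows from (a) and Theorem \ref{apr16-03}; and your explicit witness $M=x_V^N/\prod_{x_i\in C_1\cup C_2}x_i$ is also correct, since a factorization of $M$ into $N$ cover monomials would force $N$ independent $2$-sets of $G$ partitioning the $2N$ vertices of $C_1\cup C_2$, i.e.\ a perfect matching of two odd cycles.

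The one step you assert rather than prove is the reduction to the isolated-vertex-free case, specifically that deleting a universal vertex $w$ of $G$ preserves normality of $I_c(G)$. This is not ``the same Newton-polyhedron computation'': the extra minimal cover monomial $u_0=x_{V'}$ with $V'=V\setminus\{w\}$ (coming from the maximal independent set $\{w\}$) is not of the form $x_w\cdot(\text{generator of }I_c(G-w))$, so $I_c(G)$ and $I_c(G-w)$ are not related by a monomial substitution and a short argument is genuinely needed. It does go through: writing $u_S=x_w v_S$ for the other generators ($v_S$ the generators of $I_c(G-w)$), any point of $k$ times the Newton polyhedron of $I_c(G)$ lying below $me_w+c$ must place weight $\mu_0\ge k-m$ on $u_0$, so $x^c/x_{V'}^{\mu_0}\in\overline{I_c(G-w)^{k-\mu_0}}$; if $I_c(G-w)$ is normal this factors integrally, and multiplying back by $u_0^{\mu_0}x_w^{m-(k-\mu_0)}$ puts $x_w^m x^c$ in $I_c(G)^k$. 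Conversely, a witness $x^c\in\overline{I_c(G-w)^k}\setminus I_c(G-w)^k$ yields the witness $x_w^k x^c$ for $I_c(G)$, because $u_0$ is a multiple of every $v_S$, so any factorization of $x_w^kx^c$ into $k$ generators of $I_c(G)$ would produce one of $x^c$ into $k$ generators of $I_c(G-w)$. You should write this lemma out; without it your proof only covers graphs $G$ with no dominating vertex.
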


\section{Erdős–Rényi graphs and threshold functions}\label{Section: Erdos Renyi graph}

In this section we introduce the Erdős–Rényi random graph model. Further we investigate threshold functions for some properties of the Erdős–Rényi graph. 

We begin with a brief introduction to the definitions and notations used in this
paper regarding graphs. A \textit{graph} is a pair $G=(V,E)$ where $V$ is the set of vertices and $E\subset V\times V$ are the edges of the graph. Given a graph $G=(V,E)$ and a subset $V^\prime \subset V$ of $V$ we let $G_{V^\prime}$ to be the restriction of $G$ to $V^\prime$ defined as the graph with vertex set $V^\prime$ and edges $\{u,v\}$ for all $\{u,v\}\in E$ with $u,v\in V^\prime$. We say that $G^\prime=(V^\prime,E^\prime)$ is an \textit{induced subgraph} of $G$ if $V^\prime\subset V$ and $\{u,v\}\in E^\prime$ if and only if $\{u,v\}\in E$ for all $u,v\in V^\prime$. An example of a graph and and induced subgraph can be seen in Figure \ref{figure2}.

Two graphs $G_1=(V_1,E_1)$ and $G_2=(V_2,E_2)$ are considered the same if they are isomorphic, that is, there exist a bijection $f:V_1\rightarrow V_2$ and $(u,v)\in E_1$ if and only if $(f(u),f(v))\in E_2$ for all $u,v\in V_1$.

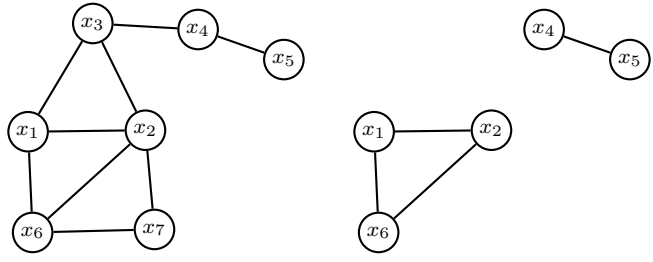
\begin{figure}[ht]
\begin{tikzpicture}[scale=2,thick]
		\tikzstyle{every node}=[minimum width=0pt, inner
		sep=1.8pt, circle]
			\draw (-2.97,0.75) node[draw] (1) { \tiny $x_1$};
			\draw (-2.19,0.76) node[draw] (2) { \tiny $x_2$};
			\draw (-2.54,1.47) node[draw] (3) { \tiny $x_3$};
            \draw (-1.84,1.43) node[draw] (4) { \tiny $x_4$};
			\draw (-1.27,1.23) node[draw] (5) { \tiny $x_5$};
            \draw (-2.94,0.08) node[draw] (6) { \tiny $x_6$};
			\draw (-2.13,0.1) node[draw] (7) { \tiny $x_7$};
            
			\draw  (5) edge (4);
			\draw  (3) edge (4);
			\draw  (3) edge (1);
			\draw  (3) edge (2);
			\draw  (1) edge (2);
			\draw  (1) edge (6);
			\draw  (2) edge (7);
			\draw  (6) edge (7);
            \draw  (6) edge (2);

            \draw (-0.67,0.75) node[draw] (11) { \tiny $x_1$};
			\draw (0.11,0.76) node[draw] (12) { \tiny $x_2$};
            \draw (0.46,1.43) node[draw] (14) { \tiny $x_4$};
			\draw (1.03,1.23) node[draw] (15) { \tiny $x_5$};
            \draw (-0.64,0.08) node[draw] (16) { \tiny $x_6$};
            
			\draw  (15) edge (14);
			\draw  (11) edge (12);
			\draw  (11) edge (16);
            \draw  (16) edge (12);

		\end{tikzpicture}
\caption{A graph $G$ (left) and an induced subgraph of $G$ (right). Here $V^\prime=\{x_1,x_2,x_4,x_5,x_6\}$. Oberve that the graph on the right is also the restriction of $G$ to $V^\prime$, $G_{V^\prime}$.}\label{figure2}
\end{figure}

\begin{notation}\label{Notation: The graphs T and Ct}
Let $t>1$ be an integer. We define the graphs $T$ and $E_t$ as follows.
\begin{enumerate}
    \item $T$ is the graph with vertex set $V=\{1,\dots,6\}$ and edges $E=\{(1,2),(2,3),(3,1),(4,5),(5,6),(6,3)\}$.
    \item $E_t$ is the graph with vertex set $V=\{1,\dots,t\}$ and edges $E=\emptyset$.
\end{enumerate}
Examples of $T$ and $E_t$ can be seen in Figure \ref{figure4}.
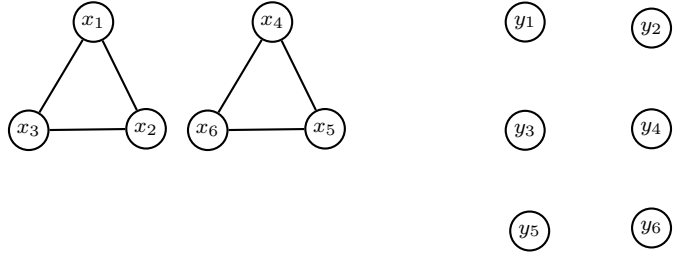
\begin{figure}[ht]
\begin{tikzpicture}[scale=2,thick]
		\tikzstyle{every node}=[minimum width=0pt, inner
		sep=1.8pt, circle]
            \draw (-3.54,1.47) node[draw] (1) { \tiny $x_1$};
			\draw (-3.19,0.76) node[draw] (2) { \tiny $x_2$};
			\draw (-3.97,0.75) node[draw] (3) { \tiny $x_3$};
			\draw (-2.35,1.47) node[draw] (4) { \tiny $x_4$};
			\draw (-2,0.76) node[draw] (5) { \tiny $x_5$};
			\draw (-2.78,0.75) node[draw] (6) { \tiny $x_6$};
            
			\draw  (1) edge (2);
			\draw  (2) edge (3);
			\draw  (3) edge (1);
			\draw  (4) edge (5);
			\draw  (5) edge (6);
			\draw  (6) edge (4);

			\draw (-0.67,1.47) node[draw] (11) { \tiny $y_1$};
            \draw (0.17,1.43) node[draw] (12) { \tiny $y_2$};
            \draw (-0.67,0.75) node[draw] (13) { \tiny $y_3$};
			\draw (0.17,0.76) node[draw] (14) { \tiny $y_4$};
            \draw (-0.64,0.08) node[draw] (15) { \tiny $y_5$};
			\draw (0.17,0.1) node[draw] (16) { \tiny $y_6$};
            
		\end{tikzpicture}
\caption{The graph $T$ (left) and the graph $E_t$ (right) with $t=6$.}\label{figure4}
\end{figure}    
\end{notation}

We are now in place to introduce the Erdős–Rényi model. We say that a random graph $\mathcal{G}$ has Erdős–Rényi distribution with parameters $n\in \mathbb{N}$ and $0\leq p\leq 1$, denoted $\mathcal{G}\sim ER(n,p)$, if $V=[n]:=\{1,\dots,n\}$ and each edge $\{u,v\}$ is chosen with probability $p$ for all $1\leq u,v\leq n$ such that $u\neq v$.

\begin{definition}
Let $n\in\mathbb{N}$, $0\leq p\leq 1$ and $\mathcal{G}\sim ER(n,p)$. Let $G$ be a graph, we define the random variable $Y_G(n,p)$ by
$$Y_G(n,p)=\sum_{V\subset [n]} \mathbbm{1}_{\{\mathcal{G}_V=G\}}.$$
$Y_G(n,p)$ counts the number of times that $G$ appears as an induced subgraph of $\mathcal{G}$.
\end{definition}

\begin{proposition}\label{Proposition: Moments of T}
Let $n\in\mathbb{N}$ with $n\geq 6$, $0\leq p\leq 1$ and $\mathcal{G}\sim ER(n,p)$. Let $T$ be the graph defined in notation \ref{Notation: The graphs T and Ct}. Then
\begin{equation}\label{Equation: Expectation of T}
\mathbb{E}(Y_T(n,p))=\binom{n}{6}\binom{6}{3}p^6(1-p)^9
\end{equation}
and,
\begin{multline}\label{Equation: Variance of T}
\V(Y_T(n,p))\leq n^{10}p^{11}q^{18}+n^{10}p^{12}q^{17}+n^{9}p^{11}q^{16}+n^9p^9q^{18} \\
+n^8p^{10}q^{14}+n^8p^9q^{15}+n^7p^8q^{12}+n^6p^6q^9
\end{multline}
\end{proposition}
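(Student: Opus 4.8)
The plan is to rewrite $Y_T(n,p)$ as a sum of indicators indexed by the potential \emph{copies} of $T$ inside the complete graph on $[n]$, obtain the expectation by linearity, and derive the variance bound from the second moment expansion, organized by how many vertices two copies share. Since a disjoint union of two triangles has a unique decomposition into connected components, a $6$-element set $V\subset[n]$ satisfies $\mathcal{G}_V\cong T$ if and only if there is a (then unique) partition $V=A\sqcup B$ into two $3$-sets such that $A$ and $B$ both span triangles and no edge of $\mathcal{G}$ joins $A$ to $B$; writing $A_{(A,B)}$ for this event and calling such an unordered pair $\{A,B\}$ a \emph{copy}, we get $Y_T(n,p)=\sum_{\{A,B\}}\mathbbm{1}_{A_{(A,B)}}$. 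Each $A_{(A,B)}$ fixes the status of the $\binom{6}{2}=15$ pairs inside $A\cup B$ -- six forced to be edges, nine forced to be non-edges -- so $\PP(A_{(A,B)})=p^{6}q^{9}$, and counting the copies yields the stated formula for $\mathbb{E}(Y_T(n,p))$.

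For the variance I would expand $\V(Y_T(n,p))=\sum_{\alpha,\beta}\mathrm{Cov}(\mathbbm{1}_{A_\alpha},\mathbbm{1}_{A_\beta})$ over ordered pairs of copies. The key remark is that $A_\alpha$ depends only on the status of the $15$ pairs lying inside the $6$-set $V_\alpha$ underlying $\alpha$; hence if $|V_\alpha\cap V_\beta|\le 1$ the two events involve disjoint sets of pairs, are independent, and contribute nothing. Using $\mathrm{Cov}(\mathbbm{1}_{A_\alpha},\mathbbm{1}_{A_\beta})\le\PP(A_\alpha\cap A_\beta)$ off the diagonal, bounding the diagonal by $\sum_\alpha\mathrm{Cov}(\mathbbm{1}_{A_\alpha},\mathbbm{1}_{A_\alpha})\le\mathbb{E}(Y_T(n,p))\le n^{6}p^{6}q^{9}$ (which yields the last summand of the bound), and noting that $\PP(A_\alpha\cap A_\beta)=0$ when $\alpha\neq\beta$ but $|V_\alpha\cap V_\beta|=6$ (two distinct copies on the same vertex set prescribe different component structures), we are reduced to estimating, for $k\in\{2,3,4,5\}$, the sum of $\PP(A_\alpha\cap A_\beta)$ over pairs with $|V_\alpha\cap V_\beta|=k$.

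Fix $k$ and a pattern specifying how the $k$ common vertices sit in the two triangles of $\alpha$ and in the two triangles of $\beta$. All $\binom{k}{2}$ pairs among the common vertices are constrained by both $A_\alpha$ and $A_\beta$: if the pattern forces some such pair to be an edge for one copy and a non-edge for the other then $\PP(A_\alpha\cap A_\beta)=0$, and otherwise $A_\alpha\cap A_\beta$ prescribes $30-\binom{k}{2}$ pairs with a split into forced edges and forced non-edges read off from the pattern, so $\PP(A_\alpha\cap A_\beta)=p^{e}q^{f}$ explicitly. Running through the compatible patterns -- for $k=2$, the common pair is an edge in both copies ($p^{11}q^{18}$) or a non-edge in both ($p^{12}q^{17}$); for $k=3$, the common triple is a triangle in both copies ($p^{9}q^{18}$) or carries exactly one common edge with the third vertex isolated ($p^{11}q^{16}$); for $k=4$, the common quadruple induces $K_3\sqcup K_1$ ($p^{9}q^{15}$) or a perfect matching ($p^{10}q^{14}$); for $k=5$, it induces $K_3\sqcup K_2$ ($p^{8}q^{12}$) -- recovers exactly the seven non-diagonal monomials of the asserted bound, while a crude count shows the number of pairs $(\alpha,\beta)$ realizing a fixed pattern is at most $n^{12-k}=n^{|V_\alpha\cup V_\beta|}$, the combinatorial multiplicities being dominated by the factorial in $\binom{n}{12-k}$. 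Adding the contributions gives the claimed inequality. The substantive part -- and the only place genuine care is needed -- is exactly this enumeration: deciding which overlap patterns are compatible (the incompatible ones being precisely why only eight monomials survive) and tracking, in each surviving case, the split of the $30-\binom{k}{2}$ constrained pairs into forced edges and forced non-edges.
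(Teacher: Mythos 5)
Your proof follows essentially the same route as the paper's: linearity for the expectation, the covariance expansion with independence for overlaps of size at most one, and a case analysis over the overlap size $k=2,\dots,6$ and the compatible overlap patterns, whose forced-edge/forced-non-edge exponents you identify correctly (they reproduce exactly the eight monomials of the bound, including the vanishing of distinct copies on a common $6$-set). The only quibble is in the expectation: summing over \emph{unordered} pairs $\{A,B\}$ of disjoint triples gives $\binom{6}{3}/2=10$ copies per $6$-set rather than the $\binom{6}{3}=20$ of the stated formula (which in effect counts ordered pairs of triangles), so your setup does not literally "yield the stated formula"; you should either switch to ordered copies or remark that this factor of $2$ is a harmless constant that affects none of the asymptotic applications.
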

\begin{proof}
We have
$$\mathbb{E}(Y_T(n,p))=\sum_{V\subset [n]}\mathbb{E}(\mathbbm{1}_{\{\mathcal{G}_V=T\}})=\sum_{V\subset [n]}\PP(\mathcal{G}_V=T).$$
This probability is non zero only when $|V|=6$, further in that case the probability is given by $\binom{6}{3}p^6q^9$ as we choose first three vertices that will conform one of the triangles of $T$ and then we multiply by the probability of having $6$ edges (corresponding to the two triangles of $T$) and the probability of all the other possible edges being empty. Finally there are $\binom{n}{6}$ ways of choosing $V$, this proves (\ref{Equation: Expectation of T}). 
Now we prove (\ref{Equation: Variance of T}). Observe that
\begin{eqnarray*}
&&\V(Y_T(n,p))=\mathbb{E}(Y_T(n,p)^2)-\mathbb{E}(Y_T(n,p))^2 \\
&=& \mathbb{E}\left[\sum_{V\subset [n]}\sum_{W\subset [n]}\mathbbm{1}_{\{\mathcal{G}_V=T\}}\mathbbm{1}_{\{\mathcal{G}_W=T\}}\right]-\sum_{V\subset [n]}\sum_{W\subset [n]}\mathbb{E}(\mathbbm{1}_{\{\mathcal{G}_V=T\}})\mathbb{E}(\mathbbm{1}_{\{\mathcal{G}_W=T\}}) \\
&=& \sum_{V\subset [n]}\sum_{W\subset [n]}\mathbb{E}(\mathbbm{1}_{\{\mathcal{G}_V=T\}}\mathbbm{1}_{\{\mathcal{G}_W=T\}})-\mathbb{E}(\mathbbm{1}_{\{\mathcal{G}_V=T\}})\mathbb{E}(\mathbbm{1}_{\{\mathcal{G}_W=T\}})
\end{eqnarray*}
Observe that when $V$ and $W$ have at most one vertex in common then $\{\mathcal{G}_V=T\}$ and $\{\mathcal{G}_W=T\}$ are independent events and therefore 
$$\mathbb{E}(\mathbbm{1}_{\{\mathcal{G}_V=T\}}\mathbbm{1}_{\{\mathcal{G}_W=T\}})-\mathbb{E}(\mathbbm{1}_{\{\mathcal{G}_V=T\}})\mathbb{E}(\mathbbm{1}_{\{\mathcal{G}_W=T\}})=0.$$
Thus
\begin{eqnarray}\label{Equation: Aux1}
\V(Y_T(n,p))&=&\sum_{\substack{V,W\subset [n] \\ |V\cap W|\geq 2}}\mathbb{E}(\mathbbm{1}_{\{\mathcal{G}_V=T\}}\mathbbm{1}_{\{\mathcal{G}_W=T\}})-\mathbb{E}(\mathbbm{1}_{\{\mathcal{G}_V=T\}})\mathbb{E}(\mathbbm{1}_{\{\mathcal{G}_W=T\}}) \nonumber \\
&\leq & \sum_{\substack{V,W\subset [n] \\ |V\cap W|\geq 2}}\mathbb{E}(\mathbbm{1}_{\{\mathcal{G}_V=T\}}\mathbbm{1}_{\{\mathcal{G}_W=T\}}) \nonumber\\
&=& \sum_{\substack{V,W\subset [n] \\ |V\cap W|\geq 2}}\PP(\mathcal{G}_V=T ,\mathcal{G}_W=T)
\end{eqnarray}
We now compute the probabilities $\PP(\mathcal{G}_V=T ,\mathcal{G}_W=T)$ for $|V\cap W|=1,\dots,6$. If $|V\cap W|=2$ there are two possible scenarios, either the shared vertices have an edge or they are disconnected. In the first scenario we claim the probability is  $16p^{11}q^{18}$. Indeed, we first choose one vertex from $V$ that will conform a triangle with the two shared edges (see Figure \ref{figure5}). This can be done in $4$ ways, the same for choosing the ones in $W$. From here we get $16$. Secondly, there are $3$ possible types of pair of vertices; the ones that must be connected, the ones that must be disconnected and the ones that can be either connected or disconnected. There are $11$ edges in $V\cup W$ which must be in the graph from where we get $p^{11}$, this correspond to the pair of vertices that must be connected (see black edges in Figure \ref{figure5}). We require $\mathcal{G}_V=T$ therefore $9$ edges must be empty, these correspond to the pair of vertices that must be disconnected (see the green edge in Figure \ref{figure5} for an example). In the same way in $\mathcal{G}_W$ there must be $9$ empty edges, from where we get $q^{18}$. Finally there are pair of vertices that might be connected or disconnected and in both cases the event $\{\mathcal{G}_V=T,\mathcal{G}_W=T\}$ holds (these correspond to edges as the one in blue in Figure \ref{figure5}). So we multiply by $1$ corresponding to $p+q$. This gives a total probability of $16p^{11}q^{18}$.

\begin{figure}[ht]
\begin{tikzpicture}[scale=1.5,thick]
		\tikzstyle{every node}=[minimum width=0pt, inner
		sep=1.8pt, circle]
			\draw (-2,1) node[draw, circle, fill=gray] (1) {};
			\draw (-2,0) node[draw, circle, fill=gray] (2) {};
			\draw (-1,0) node[draw, circle, fill=gray] (3) {};
            \draw (1,0) node[draw, circle, fill=blue] (4) {};
			\draw (2,0) node[draw, circle, fill=blue] (5) {};
            \draw (2,1) node[draw, circle, fill=blue] (6) {};
			\draw (1,2) node[draw, circle, fill=blue] (7) {};
            \draw (0,2) node[draw, circle, fill=red] (8) {};
            \draw (-1,2) node[draw, circle, fill=gray] (9) {};
            \draw (0,1) node[draw, circle, fill=red] (10) {};
            
			\draw  (1) edge (2);
			\draw  (3) edge (2);
			\draw  (3) edge (1);
			\draw  (4) edge (5);
			\draw  (6) edge (5);
			\draw  (4) edge (6);
			\draw  (10) edge (7);
			\draw  (8) edge (7);
            \draw  (8) edge (9);
            \draw  (10) edge (9);
			\draw  (10) edge (8);
            \draw[color=green]  (1) edge (9);
            \draw[color=blue]  (3) edge (4);

		\end{tikzpicture}
\caption{The sets $V$ (grey and red vertices) and $W$ (blue and red vertices). In red the shared vertices of $V$ and $W$.}\label{figure5}
\end{figure}
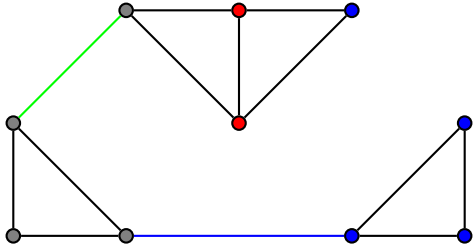

\quad In the second scenario we proceed similarly, now the probability is given by $36p^{12}q^{17}$. Observe that in this case we obtain $q^{17}$ instead of $q^{18}$ because the two shared vertices must be disconnected in both graphs ($\mathcal{G}_V$ and $\mathcal{G}_W$), therefore the total number of edges that must be empty in the graph is $9+9-1$. Finally observe that the number of ways of choosing $V$ and $W$ is given by
$$\binom{n}{6}\binom{6}{2}\binom{n-6}{4}.$$
We conclude that
\begin{eqnarray*}
\sum_{\substack{V,W\subset [n] \\ |V\cap W|= 2}}\PP(\mathcal{G}_V=T ,\mathcal{G}_W=T) &=& \binom{n}{6}\binom{6}{2}\binom{n-6}{4}(16p^{11}q^{18}+36p^{12}q^{17}) \\
&\leq & n^{10}p^{11}q^{18}+n^{10}p^{12}q^{17}.
\end{eqnarray*}
We proceed similarly for the cases $|V\cap W|=2,\dots,6$ for which we get
\begin{eqnarray*}
\sum_{\substack{V,W\subset [n] \\ |V\cap W|= 3}}\PP(\mathcal{G}_V=T ,\mathcal{G}_W=T) &=& \binom{n}{6}\binom{6}{3}\binom{n-6}{3}(27p^{11}q^{18}+p^{9}q^{18}) \\
&\leq & n^{9}p^{11}q^{16}+n^{9}p^{9}q^{18},
\end{eqnarray*}
\begin{eqnarray*}
\sum_{\substack{V,W\subset [n] \\ |V\cap W|= 4}}\PP(\mathcal{G}_V=T ,\mathcal{G}_W=T) &=& \binom{n}{6}\binom{6}{4}\binom{n-6}{2}(24p^{10}q^{14}+4p^{9}q^{15}) \\
&\leq & n^{8}p^{10}q^{14}+n^{8}p^{9}q^{15},
\end{eqnarray*}
\begin{eqnarray*}
\sum_{\substack{V,W\subset [n] \\ |V\cap W|= 5}}\PP(\mathcal{G}_V=T ,\mathcal{G}_W=T) &=& \binom{n}{6}\binom{6}{5}\binom{n-6}{1}(10p^{8}q^{12}) \\
&\leq & n^{7}p^{8}q^{12},
\end{eqnarray*}
and 
\begin{eqnarray*}
\sum_{\substack{V,W\subset [n] \\ |V\cap W|= 6}}\PP(\mathcal{G}_V=T ,\mathcal{G}_W=T) &=& \binom{n}{6}(20p^{6}q^{9}) \\
&\leq & n^{6}p^{6}q^{9}.
\end{eqnarray*}
Combining these inequalities with Inequality (\ref{Equation: Aux1}) concludes the proof.
\end{proof}

\begin{corollary}\label{Corollary: Lower bound of T}
Let $n\in\mathbb{N}$ with $n\geq 6$, $0\leq p\leq 1$ and $\mathcal{G}\sim ER(n,p)$. Let $T$ be the graph defined in notation \ref{Notation: The graphs T and Ct}. Then
\begin{multline}
\PP(Y_T(n,p)>0) \geq 1-\frac{1}{400\binom{n}{6}^2p^{12}q^{18}}(n^{10}p^{11}q^{18}+n^{10}p^{12}q^{17}+n^{9}p^{11}q^{16} \\ 
+n^9p^9q^{18}+n^8p^{10}q^{14}+n^8p^9q^{15}+n^7p^8q^{12}+n^6p^6q^9)
\end{multline}
\end{corollary}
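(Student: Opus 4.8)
The plan is to derive this bound from the second moment method, since everything substantive — the mean of $Y_T(n,p)$ and an upper bound for its variance — has already been supplied by Proposition~\ref{Proposition: Moments of T}. The starting point is the elementary fact that for any nonnegative integer-valued random variable $Y$ with $\mathbb{E}(Y)>0$ one has $\PP(Y=0)\le \V(Y)/\mathbb{E}(Y)^2$, which follows from Chebyshev's inequality applied via the containment of events $\{Y=0\}\subseteq\{|Y-\mathbb{E}(Y)|\ge \mathbb{E}(Y)\}$; equivalently, $\PP(Y>0)\ge 1-\V(Y)/\mathbb{E}(Y)^2$.

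First I would apply this inequality with $Y=Y_T(n,p)$. This is legitimate because $Y_T(n,p)=\sum_{V\subset[n]}\mathbbm{1}_{\{\mathcal{G}_V=T\}}$ is a finite sum of indicator functions, hence nonnegative and integer-valued, and because by (\ref{Equation: Expectation of T}) its expectation $\binom{n}{6}\binom{6}{3}p^6(1-p)^9$ is strictly positive for $n\ge 6$ and $0<p<1$. Next I would insert the two estimates of Proposition~\ref{Proposition: Moments of T}: using $\binom{6}{3}=20$, equation (\ref{Equation: Expectation of T}) gives $\mathbb{E}(Y_T(n,p))^2=400\binom{n}{6}^2p^{12}q^{18}$, and since this quantity is positive, dividing the variance bound (\ref{Equation: Variance of T}) by it preserves the inequality. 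Thus
$$\PP(Y_T(n,p)>0)\ \ge\ 1-\frac{\V(Y_T(n,p))}{400\binom{n}{6}^2p^{12}q^{18}}\ \ge\ 1-\frac{n^{10}p^{11}q^{18}+\cdots+n^{6}p^{6}q^{9}}{400\binom{n}{6}^2p^{12}q^{18}},$$
where the numerator in the last expression is the eight-term sum of (\ref{Equation: Variance of T}); factoring the constant $1/(400\binom{n}{6}^2p^{12}q^{18})$ out front yields exactly the asserted inequality.

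I do not anticipate any genuine obstacle here: the combinatorial bookkeeping and the variance estimate are entirely contained in Proposition~\ref{Proposition: Moments of T}, and this corollary is merely its repackaging through Chebyshev's inequality. The only point deserving a word of care is the degenerate cases $p\in\{0,1\}$, where the right-hand side of the claimed inequality is undefined (division by $0$); there the statement should be read as vacuous, and in any event the corollary will only be used in the regime $0<p<1$ (in fact with $pq^{3/2}=\omega(1/n)$) when proving Theorem~\ref{Theorem: Main theorem normality 1}.
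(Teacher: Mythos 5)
Your proposal is correct and matches the paper's own proof essentially line for line: the paper likewise deduces $\PP(Y_T(n,p)>0)\geq 1-\V(Y_T(n,p))/\mathbb{E}(Y_T(n,p))^2$ from Chebyshev's inequality via the containment $\{Y=0\}\subseteq\{|Y-\mathbb{E}(Y)|\geq\mathbb{E}(Y)\}$, and then substitutes Equation (\ref{Equation: Expectation of T}) (with $\binom{6}{3}=20$ giving the $400$) and the variance bound (\ref{Equation: Variance of T}). Your remark about the degenerate cases $p\in\{0,1\}$ is a sensible extra precaution that the paper does not bother to make.
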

\begin{proof}
Note that if $\mathbb{E}(Y_T(n,p))>|Y_T(n,p)-\mathbb{E}(Y_T(n,p))|$ then $Y_T(n,p)>0$, thus
\begin{eqnarray*}
\PP(Y_T(n,p)>0) &\geq & \PP(\mathbb{E}(Y_T(n,p))>|Y_T(n,p)-\mathbb{E}(Y_T(n,p))|) \\
&=& 1-\PP(|Y_T(n,p)-\mathbb{E}(Y_T(n,p))|\geq \mathbb{E}(Y_T(n,p))) \\
&\geq & 1-\frac{Var(Y_T(n,p))}{\mathbb{E}(Y_T(n,p))^2}.
\end{eqnarray*}
Where the last inequality follows from Chebyshev inequality. We conclude using Equation (\ref{Equation: Expectation of T}) and Inequality (\ref{Equation: Variance of T}).
\end{proof}

With the help of Propositions \ref{Proposition: Moments of T} and Corollary \ref{Corollary: Lower bound of T} we get the following threshold function.

\begin{corollary}
Let $T$ be the graph defined in notation \ref{Notation: The graphs T and Ct}. Let $n\in\mathbb{N}$ with $n\geq 6$, $0\leq p\leq 1$ and $\mathcal{G}\sim ER(n,p)$. Then $\frac{1}{n}$ is a threshold function for $pq^{3/2}$ for the property $\mathcal{G}$ has $T$ as an induced subgraph. In other words
\begin{equation*}
\lim_{n\rightarrow \infty}\PP(\mathcal{G}\text{ has }T\text{ as an induced subgraph})= \left\{ \begin{array}{lcc} 0 & \text{if} & pq^{3/2}=o(\frac{1}{n}) \\ \\ 1 & \text{if} & pq^{3/2}=\omega(\frac{1}{n}) \end{array} \right.
\end{equation*}
\end{corollary}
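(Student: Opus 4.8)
The plan is to establish the two halves of the threshold statement separately, using the moment estimates already in hand.

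\medskip

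\textbf{The $0$-statement.} Suppose $pq^{3/2}=o(1/n)$. I would bound $\PP(\mathcal{G}\text{ has }T\text{ as an induced subgraph})=\PP(Y_T(n,p)>0)$ from above by the first moment $\mathbb{E}(Y_T(n,p))$, via Markov's inequality. By Equation~(\ref{Equation: Expectation of T}) we have $\mathbb{E}(Y_T(n,p))=\binom{n}{6}\binom{6}{3}p^6(1-p)^9\leq C n^6 p^6 q^9$ for an absolute constant $C$. Now write $n^6 p^6 q^9 = (np)^6 q^9$; since we only control $pq^{3/2}$, the cleaner route is $n^6p^6q^9 = n^6(pq^{3/2})^6 = (n\,pq^{3/2})^6$, which tends to $0$ under the hypothesis $pq^{3/2}=o(1/n)$. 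Hence $\mathbb{E}(Y_T(n,p))\to 0$ and so $\PP(Y_T(n,p)>0)\to 0$.

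\medskip

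\textbf{The $1$-statement.} Suppose $pq^{3/2}=\omega(1/n)$, i.e. $n\,pq^{3/2}\to\infty$. Here I would invoke Corollary~\ref{Corollary: Lower bound of T} directly: it gives
\[
\PP(Y_T(n,p)>0)\ \geq\ 1-\frac{1}{400\binom{n}{6}^2 p^{12}q^{18}}\Bigl(n^{10}p^{11}q^{18}+n^{10}p^{12}q^{17}+n^{9}p^{11}q^{16}+n^9p^9q^{18}+n^8p^{10}q^{14}+n^8p^9q^{15}+n^7p^8q^{12}+n^6p^6q^9\Bigr).
\]
Since $\binom{n}{6}^2\geq c\,n^{12}$ for large $n$, it suffices to show each of the eight ratios
\[
\frac{n^{10}p^{11}q^{18}}{n^{12}p^{12}q^{18}},\ \frac{n^{10}p^{12}q^{17}}{n^{12}p^{12}q^{18}},\ \frac{n^{9}p^{11}q^{16}}{n^{12}p^{12}q^{18}},\ \frac{n^9p^9q^{18}}{n^{12}p^{12}q^{18}},\ \frac{n^8p^{10}q^{14}}{n^{12}p^{12}q^{18}},\ \frac{n^8p^9q^{15}}{n^{12}p^{12}q^{18}},\ \frac{n^7p^8q^{12}}{n^{12}p^{12}q^{18}},\ \frac{n^6p^6q^9}{n^{12}p^{12}q^{18}}
\]
tends to $0$. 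After cancellation these become $\tfrac{1}{n^2 p q^0}$-type expressions; the governing observation is that each simplifies to a negative power of $n\,pq^{3/2}$ (possibly times extra negative powers of $n$ or of $q\le 1$ or $p\le 1$). For instance the last one is $1/(n^6p^6q^9)=1/(n\,pq^{3/2})^6\to 0$, the fourth is $1/(n^3p^3)=1/\bigl((n\,pq^{3/2})^3 q^{-9/2}\bigr)\cdot$(bounded), and similarly for the rest; one checks term by term that the exponent of $(n\,pq^{3/2})$ is positive and any residual $q$ or $p$ factor sits in the numerator (so is $\le 1$) or can be absorbed. Hence the whole bracket divided by $n^{12}p^{12}q^{18}$ tends to $0$, giving $\PP(Y_T(n,p)>0)\to 1$.

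\medskip

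\textbf{Main obstacle.} The only real work is the bookkeeping in the $1$-statement: one must verify for \emph{each} of the eight variance terms that, after dividing by $\mathbb{E}(Y_T(n,p))^2\asymp n^{12}p^{12}q^{18}$, the result is bounded by a negative power of the single quantity $n\,pq^{3/2}$ together with factors that are at most $1$. The subtlety is that $p$ and $q$ are not independent parameters ($q=1-p$), so one should treat the regimes $p$ bounded away from $1$ and $p\to 1$ with some care — but since every leftover $p$ or $q$ factor after cancellation appears with a nonnegative exponent in the numerator, each is $\le 1$ and causes no harm. I expect no genuine difficulty beyond this routine case analysis, since the needed variance bound and first-moment formula are both already established in Proposition~\ref{Proposition: Moments of T} and Corollary~\ref{Corollary: Lower bound of T}.
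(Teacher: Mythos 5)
Your proposal is correct and follows essentially the same route as the paper: Markov's inequality with $\mathbb{E}(Y_T(n,p))=\binom{n}{6}\binom{6}{3}p^6q^9\leq C(npq^{3/2})^6$ for the $0$-statement, and the Chebyshev lower bound of Corollary \ref{Corollary: Lower bound of T} with a term-by-term check that each of the eight variance contributions divided by $n^{12}p^{12}q^{18}$ is a negative power of $npq^{3/2}$ (times factors at most $1$) for the $1$-statement, which is exactly the computation the paper carries out in the proof of Theorem \ref{Theorem: Main theorem normality 1} and then cites here.
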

\begin{proof}
Note that $\{\mathcal{G}\text{ has }T\text{ as an induced subgraph}\}=\{Y_T(n,p)>0\}$ hence,
$$\PP(\{\mathcal{G}\text{ has }T\text{ as an induced subgraph}\})=\PP(\{Y_T(n,p)>0\})\leq \mathbb{E}(Y_T(n,p)).$$
From Proposition \ref{Proposition: Moments of T} the limit of the right hand side converges to $0$ given $pq^{3/2}=o(\frac{1}{n})$ which proves the first part. We omit the proof of the second part as it will be exactly the same as that of our Theorem \ref{Theorem: Main theorem normality 1}.
\end{proof}

\begin{proposition}\label{Proposition: Moments of Ct}
Let $t>1$ be an integer and $E_t$ the graph defined in notation \ref{Notation: The graphs T and Ct}. Let $n\in\mathbb{N}$ with $n\geq t$, $0\leq p\leq 1$ and $\mathcal{G}\sim ER(n,p)$. Then
\begin{equation}\label{Equation: Expectation of Ct}
\mathbb{E}(Y_{E_t}(n,p))=\binom{n}{t}(1-p)^{\binom{t}{2}}
\end{equation}
and,
\begin{equation}\label{Equation: Variance of Ct}
V(Y_{E_t}(n,p))\leq n^{2t}(1-p)^{2\binom{t}{2}}\sum_{j=2}^t \frac{1}{n^j(1-p)^{\binom{j}{2}}}.
\end{equation}
\end{proposition}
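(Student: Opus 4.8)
The plan is to compute the first moment $\mathbb{E}(Y_{E_t}(n,p))$ directly and then bound the variance using the same second-moment decomposition as in Proposition \ref{Proposition: Moments of T}. For the expectation, $Y_{E_t}(n,p)=\sum_{V\subset[n]}\mathbbm{1}_{\{\mathcal{G}_V=E_t\}}$, and $\PP(\mathcal{G}_V=E_t)$ is nonzero only when $|V|=t$; in that case $\mathcal{G}_V=E_t$ exactly when none of the $\binom{t}{2}$ possible edges among the vertices of $V$ is present, which has probability $(1-p)^{\binom{t}{2}}$. Since there are $\binom{n}{t}$ subsets of size $t$, linearity of expectation gives (\ref{Equation: Expectation of Ct}). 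This step is routine.

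For the variance, I would repeat the argument in the proof of Proposition \ref{Proposition: Moments of T}. Write $\V(Y_{E_t})=\sum_{V,W}\big(\PP(\mathcal{G}_V=E_t,\mathcal{G}_W=E_t)-\PP(\mathcal{G}_V=E_t)\PP(\mathcal{G}_W=E_t)\big)$, and note that when $|V\cap W|\leq 1$ the two events are independent (they depend on disjoint edge sets), so those terms vanish; hence only the terms with $|V|=|W|=t$ and $|V\cap W|=j$ for $2\leq j\leq t$ survive, and we may bound the variance by $\sum_{j=2}^{t}\sum_{|V\cap W|=j}\PP(\mathcal{G}_V=E_t,\mathcal{G}_W=E_t)$. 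For fixed $V,W$ with $|V\cap W|=j$, the event $\{\mathcal{G}_V=E_t,\mathcal{G}_W=E_t\}$ requires all pairs inside $V\cup W$ that lie inside $V$ or inside $W$ to be non-edges; the number of such pairs is $2\binom{t}{2}-\binom{j}{2}$ (the pairs inside the common part $V\cap W$ being counted once rather than twice), so the probability is $(1-p)^{2\binom{t}{2}-\binom{j}{2}}$. The number of pairs $(V,W)$ with $|V\cap W|=j$ is $\binom{n}{t}\binom{t}{j}\binom{n-t}{t-j}\leq n^{t}\cdot n^{t-j}=n^{2t-j}$ (absorbing the bounded binomial factor $\binom{t}{j}$ into the inequality for $n$ large, or simply noting $\binom{t}{j}\leq 2^t$ is a constant and can be handled by a slightly looser bound). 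Putting these together,
\begin{equation*}
\V(Y_{E_t}(n,p))\leq \sum_{j=2}^{t} n^{2t-j}(1-p)^{2\binom{t}{2}-\binom{j}{2}} = n^{2t}(1-p)^{2\binom{t}{2}}\sum_{j=2}^{t}\frac{1}{n^{j}(1-p)^{\binom{j}{2}}},
\end{equation*}
which is exactly (\ref{Equation: Variance of Ct}).

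The only mild subtlety — and the one place to be careful — is the bookkeeping of exactly how many forbidden non-edges there are for a given overlap $j$, i.e. justifying the exponent $2\binom{t}{2}-\binom{j}{2}$: pairs with both endpoints in $V$ contribute $\binom{t}{2}$, pairs with both endpoints in $W$ contribute $\binom{t}{2}$, and pairs with both endpoints in $V\cap W$ have been double-counted, so one subtracts $\binom{j}{2}$; pairs with one endpoint in $V\setminus W$ and one in $W\setminus V$ are unconstrained (either value of that edge is compatible with both induced-subgraph conditions), matching the $p+q=1$ factor used in the proof of Proposition \ref{Proposition: Moments of T}. I would also note that the constant binomial factors $\binom{t}{j}$ (and, in the triangle case, the factors like $16$, $36$, $27$) are harmless since $t$ is fixed, so for $n$ sufficiently large $\binom{n}{t}\binom{t}{j}\binom{n-t}{t-j}\leq n^{2t-j}$; this is the analogue of the step in Proposition \ref{Proposition: Moments of T} where $\binom{n}{6}\binom{6}{2}\binom{n-6}{4}(16p^{11}q^{18}+36p^{12}q^{17})\leq n^{10}p^{11}q^{18}+n^{10}p^{12}q^{17}$. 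No genuine obstacle is expected; this is a clean generalization of the preceding proposition to an empty target graph, where the absence of required edges makes the combinatorics simpler than in the triangle case.
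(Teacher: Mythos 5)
Your proposal is correct and follows essentially the same route as the paper: the same first-moment computation, the same reduction of the variance to overlapping pairs $|V\cap W|=j\geq 2$, the same exponent $2\binom{t}{2}-\binom{j}{2}$ from the double-counted pairs inside $V\cap W$, and the same count $\binom{n}{t}\binom{t}{j}\binom{n-t}{t-j}\leq n^{2t-j}$. (The only cosmetic difference is your caution about absorbing $\binom{t}{j}$ for large $n$; in fact $\binom{n}{t}\binom{t}{j}\binom{n-t}{t-j}\leq \frac{n^t}{t!}\cdot\frac{t!}{j!(t-j)!}\cdot\frac{n^{t-j}}{(t-j)!}\leq n^{2t-j}$ holds for all $n$, which is what the paper implicitly uses.)
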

\begin{proof}
We proceed similarly as in Proposition \ref{Proposition: Moments of T}. We have
$$\mathbb{E}(Y_{E_t}(n,p))=\sum_{V\subset [n]}\mathbb{E}(\mathbbm{1}_{\{\mathcal{G}_V=C_t\}})=\sum_{V\subset [n]}\PP(\mathcal{G}_V=C_t).$$
This probability is non zero only when $|V|=t$, further in that case the probability is given by $q^{\binom{t}{2}}$ as we require the graph to be empty. Finally there are $\binom{n}{t}$ ways of choosing $V$, this proves (\ref{Equation: Expectation of Ct}). To prove (\ref{Equation: Variance of Ct}) we use Equation (\ref{Equation: Aux1}): 
\begin{equation}\label{Equation: Aux2}
Var(Y_{E_t}(n,p)) \leq \sum_{\substack{V,W\subset [n] \\ |V\cap W|\geq 2}}\PP(\mathcal{G}_V=C_t,\mathcal{G}_W=C_t)
\end{equation}
In this case we need to consider $|V\cap W|=2,\dots, t$. for all $V$ and $W$ such that $|V|=|W|=t$. Let $2\leq j\leq t$ and $|V\cap W|=j$, then
$$\PP(\mathcal{G}_V=C_T,\mathcal{G}_W=C_t)=q^{2\binom{t}{2}-\binom{j}{2}},$$
because we require $\mathcal{G}_V$ to be the empty graph which occurs with probability $q^{\binom{t}{2}}$. The same reasoning applies to $\mathcal{G}_W$, however as $|V\cap W|=j$ then we are counting twice the edges that must be empty and whose vertices are in $V\cap W$, there are $\binom{j}{2}$ of these edges. Therefore we multiply by the probability $q^{-\binom{j}{2}}$. Finally there are
$$\binom{n}{t}\binom{t}{j}\binom{n-t}{t-j}$$
ways of choosing $V$ and $W$. Thus from inequality (\ref{Equation: Aux2})
\begin{eqnarray*}
Var(T_{E_t}(n,p))&\leq & \sum_{j=2}^t \binom{n}{t}\binom{t}{j}\binom{n-t}{t-j}q^{2\binom{t}{2}-\binom{j}{2}} \\
&\leq & \sum_{j=2}^t n^{2t-j}q^{2\binom{t}{2}-\binom{j}{2}}
\end{eqnarray*}
which proves (\ref{Equation: Variance of Ct}).
\end{proof}

\begin{corollary}\label{Corollary: Lower bound of Ct}
Let $t>1$ be an integer and $E_t$ the graph defined in notation \ref{Notation: The graphs T and Ct}. Let $n\in\mathbb{N}$ with $n\geq t$, $0\leq p\leq 1$ and $\mathcal{G}\sim ER(n,p)$. Then
\begin{equation}
\PP(Y_{E_t}(n,p)>0)\geq 1-\frac{n^{2t}}{\binom{n}{t}^2}\sum_{j=2}^t \frac{1}{n^j(1-p)^{\binom{j}{2}}}.
\end{equation}
\end{corollary}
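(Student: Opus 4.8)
The plan is to mimic verbatim the argument of Corollary \ref{Corollary: Lower bound of T}, now using the moment estimates from Proposition \ref{Proposition: Moments of Ct} in place of those from Proposition \ref{Proposition: Moments of T}. The only input beyond these moment bounds is the elementary observation that if a nonnegative integer-valued random variable $Z$ satisfies $\mathbb{E}(Z)>|Z-\mathbb{E}(Z)|$, then necessarily $Z>0$; applied to $Z=Y_{E_t}(n,p)$ this gives the event inclusion $\{\mathbb{E}(Y_{E_t}(n,p))>|Y_{E_t}(n,p)-\mathbb{E}(Y_{E_t}(n,p))|\}\subset\{Y_{E_t}(n,p)>0\}$, hence a lower bound on the probability we want.

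Concretely, I would first write
$$
\PP(Y_{E_t}(n,p)>0)\geq \PP\big(\mathbb{E}(Y_{E_t}(n,p))>|Y_{E_t}(n,p)-\mathbb{E}(Y_{E_t}(n,p))|\big)=1-\PP\big(|Y_{E_t}(n,p)-\mathbb{E}(Y_{E_t}(n,p))|\geq \mathbb{E}(Y_{E_t}(n,p))\big),
$$
and then apply Chebyshev's inequality to the right-hand side to obtain the bound $1-\V(Y_{E_t}(n,p))/\mathbb{E}(Y_{E_t}(n,p))^2$. (I would note that we may assume $\mathbb{E}(Y_{E_t}(n,p))>0$, which holds since $n\geq t$ and $0\leq p\leq 1$; if $p=1$ the statement is vacuous or trivial, but in any case $\binom{n}{t}(1-p)^{\binom{t}{2}}$ appearing in a denominator is handled by restricting to $p<1$, which is the only interesting regime.)

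The final step is purely algebraic: substitute the exact expectation $\mathbb{E}(Y_{E_t}(n,p))=\binom{n}{t}(1-p)^{\binom{t}{2}}$ from Equation (\ref{Equation: Expectation of Ct}) and the variance bound $\V(Y_{E_t}(n,p))\leq n^{2t}(1-p)^{2\binom{t}{2}}\sum_{j=2}^t n^{-j}(1-p)^{-\binom{j}{2}}$ from Equation (\ref{Equation: Variance of Ct}). The factor $(1-p)^{2\binom{t}{2}}$ cancels between numerator and denominator of the variance-to-squared-expectation ratio, leaving exactly $\frac{n^{2t}}{\binom{n}{t}^2}\sum_{j=2}^t \frac{1}{n^j(1-p)^{\binom{j}{2}}}$, which is the claimed bound.

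There is essentially no obstacle here: the corollary is a direct consequence of Proposition \ref{Proposition: Moments of Ct} and the second-moment (Chebyshev) method, exactly as Corollary \ref{Corollary: Lower bound of T} followed from Proposition \ref{Proposition: Moments of T}. The only mild care needed is to phrase the argument so that it is valid when $p=1$ (where $Y_{E_t}$ is identically $0$ for $t\geq 2$ and the right-hand side is $-\infty$ or undefined, so the inequality is trivially true under the usual conventions), but this is a cosmetic point rather than a genuine difficulty.
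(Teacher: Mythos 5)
Your argument is correct and is exactly the paper's: apply the second-moment/Chebyshev bound $\PP(Y_{E_t}(n,p)>0)\geq 1-\V(Y_{E_t}(n,p))/\mathbb{E}(Y_{E_t}(n,p))^2$ as in Corollary \ref{Corollary: Lower bound of T}, then substitute the moments from Proposition \ref{Proposition: Moments of Ct} and cancel the factor $(1-p)^{2\binom{t}{2}}$. Your extra remark about the degenerate case $p=1$ is a harmless refinement the paper does not bother with.
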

\begin{proof}
Proceeding as in Corollary \ref{Corollary: Lower bound of T}
\begin{eqnarray*}
\PP(Y_{E_t}(n,p)>0)&\geq & 1-\frac{Var(T_{E_t}(n,p))}{\mathbb{E}(Y_{E_t}(n,p))^2} \\
&\geq & 1- \frac{n^{2t}(1-p)^{2\binom{t}{2}}\sum_{j=2}^t \frac{1}{n^j (1-p)^{\binom{j}{2}}}}{\binom{n}{t}^2 (1-p)^{2\binom{t}{2}}} \\
&=& 1-\frac{n^{2t}}{\binom{n}{t}^2}\sum_{j=1}^t \frac{1}{n^j (1-p)^{\binom{j}{2}}},
\end{eqnarray*}
where the second inequality follows from Proposition \ref{Proposition: Moments of Ct}. 
\end{proof}

As a Corollary we obtain the following threshold function.

\begin{corollary}
Let $t>1$ be an integer and $E_t$ the graph defined in notation \ref{Notation: The graphs T and Ct}. Let $n\in\mathbb{N}$ with $n\geq t$, $0\leq p\leq 1$ and $\mathcal{G}\sim ER(n,p)$. Then $\frac{1}{n^{2/(t-1)}}$ is a threshold function for $q$ for the property $\mathcal{G}$ has $E_t$ as an induced subgraph. In other words
\begin{equation*}
\lim_{n\rightarrow \infty}\PP(\mathcal{G}\text{ has }E_t\text{ as an induced subgraph})= \left\{ \begin{array}{lcc} 0 & \text{if} & q=o(\frac{1}{n^{2/(t-1)}}) \\ \\ 1 & \text{if} & q=\omega(\frac{1}{n^{2/(t-1)}}) \end{array} \right.
\end{equation*}
\end{corollary}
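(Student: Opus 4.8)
The plan is to rewrite the event $\{\mathcal{G}\text{ has }E_t\text{ as an induced subgraph}\}$ as $\{Y_{E_t}(n,p)>0\}$ and then run a first-moment argument for the $0$-statement and a second-moment argument for the $1$-statement, using exactly the expectation and variance bounds of Proposition \ref{Proposition: Moments of Ct} and Corollary \ref{Corollary: Lower bound of Ct}. For the regime $q=o(1/n^{2/(t-1)})$ I would bound $\PP(Y_{E_t}(n,p)>0)\le \mathbb{E}(Y_{E_t}(n,p))$ by Markov's inequality, and then use Equation (\ref{Equation: Expectation of Ct}) together with $\binom{n}{t}\le n^t$ to get $\mathbb{E}(Y_{E_t}(n,p))\le n^t q^{\binom{t}{2}}=\bigl(n^{2/(t-1)}q\bigr)^{t(t-1)/2}$. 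Since $t(t-1)/2>0$ and $n^{2/(t-1)}q\to 0$ by hypothesis, the right-hand side tends to $0$, which settles the first case.

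For the regime $q=\omega(1/n^{2/(t-1)})$, i.e. $n^{2/(t-1)}q\to\infty$, I would invoke Corollary \ref{Corollary: Lower bound of Ct}, which gives
$$\PP(Y_{E_t}(n,p)>0)\ \ge\ 1-\frac{n^{2t}}{\binom{n}{t}^2}\sum_{j=2}^{t}\frac{1}{n^j q^{\binom{j}{2}}}.$$
The prefactor $n^{2t}/\binom{n}{t}^2$ is bounded (indeed it converges to $(t!)^2$), so it suffices to show each of the $t-1$ summands tends to $0$. I would use the identity $n^j q^{\binom{j}{2}}=\bigl(n^{2/(j-1)}q\bigr)^{j(j-1)/2}$; since $2\le j\le t$ forces $n^{2/(j-1)}\ge n^{2/(t-1)}$, we get $n^{2/(j-1)}q\ge n^{2/(t-1)}q\to\infty$, and because $j(j-1)/2\ge 1$ this makes $n^j q^{\binom{j}{2}}\to\infty$, hence $1/(n^j q^{\binom{j}{2}})\to 0$. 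Summing the finitely many terms and substituting back gives $\PP(Y_{E_t}(n,p)>0)\to 1$.

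I do not expect a serious obstacle here: this is a standard threshold computation, and most of the work (the moment estimates) has already been carried out. The one point that deserves care — and the reason the two halves glue into a sharp threshold — is the observation that among the exponents $2/(j-1)$, $2\le j\le t$, the smallest is $2/(t-1)$, so the binding term in the variance sum is $j=t$, which is exactly the exponent governing the first-moment bound. Hence the two regimes leave no gap, and $1/n^{2/(t-1)}$ is a genuine (sharp) threshold function for $q$ for the property that $\mathcal{G}$ has $E_t$ as an induced subgraph.
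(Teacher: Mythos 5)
Your argument is correct and is exactly the proof the paper intends: the paper omits the proof of this corollary, deferring to the proof of Theorem \ref{Theorem: Main resul Krull}, which uses the identical first-moment bound $\mathbb{E}(Y_{E_t})\le n^tq^{\binom{t}{2}}=[n^{2/(t-1)}q]^{t(t-1)/2}$ for the $0$-statement and the identical second-moment bound from Corollary \ref{Corollary: Lower bound of Ct} with the observation $n^jq^{\binom{j}{2}}=[n^{2/(j-1)}q]^{\binom{j}{2}}\ge[n^{2/(t-1)}q]^{\binom{j}{2}}\to\infty$ for the $1$-statement. No gaps.
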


We omit the proof as it is exactly the same argument that we use to prove our main Theorem \ref{Theorem: Main resul Krull}. 

To conclude this section we prove that if $p=o(\frac{1}{n})$ then $\mathcal{G}\sim ER(n,p)$ has no cycles asymptotically almost surely.

\begin{proposition}\label{Proposition: having a cycle}
Let $n\in\mathbb{N}$, $0\leq p\leq 1$ and $\mathcal{G}\sim ER(n,p)$. If $p=o(\frac{1}{n})$ then
$$\lim_{n\rightarrow\infty}\PP(\{\mathcal{G}\text{ has a cycle}\})=0.$$
\end{proposition}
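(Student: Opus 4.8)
The plan is to use the standard first-moment (union bound) argument over cycles of all lengths. For each integer $k\geq 3$ let $X_k$ denote the number of $k$-cycles in $\mathcal{G}\sim ER(n,p)$; then $\{\mathcal{G}\text{ has a cycle}\}=\{\sum_{k\geq 3}X_k>0\}$, and by Markov's inequality
\[
\PP(\{\mathcal{G}\text{ has a cycle}\})\leq \mathbb{E}\Big(\sum_{k=3}^{n}X_k\Big)=\sum_{k=3}^{n}\mathbb{E}(X_k).
\]
So the whole argument reduces to computing $\mathbb{E}(X_k)$ and summing. First I would count: a $k$-cycle is determined by a choice of $k$ vertices out of $n$ together with a cyclic ordering of them, of which there are $\tfrac{1}{2k}\cdot k!$ distinct cycles on a fixed $k$-set (dividing by $k$ rotations and $2$ reflections), and each such cycle is present with probability $p^{k}$. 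Hence
\[
\mathbb{E}(X_k)=\binom{n}{k}\frac{k!}{2k}p^{k}=\frac{(n)_k}{2k}p^{k}\leq \frac{(np)^{k}}{2k}\leq \frac{(np)^{k}}{6},
\]
using the falling factorial $(n)_k=n(n-1)\cdots(n-k+1)\leq n^{k}$ and $k\geq 3$.

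Next I would plug in the hypothesis $p=o(1/n)$, i.e. $np\to 0$. Pick $n$ large enough that $np\leq 1/2$ (say). Then
\[
\sum_{k=3}^{n}\mathbb{E}(X_k)\leq \frac{1}{6}\sum_{k=3}^{\infty}(np)^{k}=\frac{1}{6}\cdot\frac{(np)^{3}}{1-np}\leq \frac{(np)^{3}}{3}\longrightarrow 0
\]
as $n\to\infty$, since $np\to 0$. Combining with the Markov bound above gives $\PP(\{\mathcal{G}\text{ has a cycle}\})\to 0$, which is the claim. (If one prefers to avoid assuming $np\leq 1/2$ from the outset, one can instead bound $\sum_{k\geq 3}(np)^k/(2k)$ by noting it is dominated, for $np$ small, by a convergent geometric tail; this is the only place any care is needed.)

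I do not expect a genuine obstacle here: the estimate is routine and the series converges geometrically as soon as $np<1$, which is guaranteed eventually by $p=o(1/n)$. The only mild subtlety is the bookkeeping for the number of distinct $k$-cycles on a $k$-element vertex set (the factor $\tfrac{k!}{2k}$) and making sure the sum over the full range $3\leq k\leq n$ — not just fixed $k$ — is controlled uniformly; both are handled by the crude bounds $(n)_k\leq n^k$ and $\tfrac{1}{2k}\leq\tfrac16$ displayed above, which already suffice.
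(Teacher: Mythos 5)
Your proposal is correct and follows essentially the same route as the paper: a first-moment/Markov bound on the expected number of cycles, the crude estimate $\mathbb{E}(X_k)\leq (np)^k$ (the paper even over-counts by using $\binom{n}{k}k!$ rather than your exact $\binom{n}{k}\frac{k!}{2k}$), and summation of the resulting geometric series $\frac{(np)^3}{1-np}\to 0$. Your explicit remark that one should take $n$ large enough that $np<1$ before invoking the geometric series is a small point of care the paper leaves implicit.
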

\begin{proof}
Let $X_n$ the random variable that counts the number of cycles of $\mathcal{G}$. Hence
$$\PP(\{\mathcal{G}\text{ has a cycle}\})=\PP(X_n>0)\leq \mathbb{E}(X_n),$$
where last inequality follows from Markov's inequality. So it is enough to prove $\mathbb{E}(X_n)\rightarrow 0$. Let $S_k\subset [n]$ be the subsets of $[n]$ of size $k$, then
$$X_n=\sum_{k=3}^n \sum_{S\subset S_k}\mathbbm{1}_{\{\mathcal{G}_{S_k}\text{ is a cycle of length } k\}}.$$
Therefore
$$\mathbb{E}(X_n)=\sum_{k=3}^n \sum_{S\subset S_k}\PP({\{\mathcal{G}_{S_k}\text{ is a cycle of length } k\}}).$$
For a fixed $S\subset S_k$ observe that there are at most $\binom{n}{k}k!$ cycles as this counts the number of ordered set of vertices and each ordered set of vertices induces a cycle by joining adjacent vertices. It might be possible we are over-counting cycles, in either case the number of cycles is upper bounded by $\binom{n}{k}k!$. Given an ordered set of vertices the probability of having a cycle is $p^k$, hence
$$\mathbb{E}(X_n)\leq \sum_{k= 3}^n \binom{n}{k}k!p^k \leq \sum_{k\geq 3} n^kp^k = (np)^3\sum_{k\geq 0}(np)^k = \frac{(np)^3}{1-np}.$$
Since the right hand side converges to $0$ the proof is completed.
\end{proof}

\section{Proof of main results}\label{Section: Proof of main results}

This Section is devoted to provided the proof of all of our results. Let us remind that our idea is finding upper and lower bounds for the desired probabilities with the help of Section \ref{Section: Erdos Renyi graph}.

\begin{proof}[Proof of Theorem \ref{Theorem: Main theorem normality 1}]
From \cite[Theorem~5.8]{v-number} we have the following equality of events
$$\{I(\mathcal{G})\text{ is not normal}\}=\{\mathcal{G}\text{ has a Hochster configuration}\}.$$
Firstly, we prove that if $p=o(\frac{1}{n})$ then 
$$\PP(\{\mathcal{G}\text{ has a Hochster configuration}\})\rightarrow 0.$$
This follows easily from the fact that
$$0\leq \PP(\{\mathcal{G}\text{ has a Hochster configuration}\}) \leq \PP(\{\mathcal{G}\text{ has a cycle}\})$$
where the right hand side converges to $0$ because of Proposition \ref{Proposition: having a cycle}.
Secondly, we prove that if $pq^{3/2}=\omega(\frac{1}{n})$ then 
$$\PP(\{\mathcal{G}\text{ has a Hochster configuration}\})\rightarrow 1.$$
Indeed
\begin{equation}\label{Equation: Aux5}
1\geq \PP(\{\mathcal{G}\text{ has a Hochster configuration}\})\geq \PP(Y_{T}(n,p)>0),
\end{equation}
where the last inequality follows because $T$ is a Hochster configuration of the graph. From Corollary \ref{Corollary: Lower bound of T} we know
\begin{multline*}
\PP(Y_T(n,p)>0) \geq 1-\frac{1}{400\binom{n}{6}^2p^{12}q^{18}}(n^{10}p^{11}q^{18}+n^{10}p^{12}q^{17}+n^{9}p^{11}q^{16} \\ 
+n^9p^9q^{18}+n^8p^{10}q^{14}+n^8p^9q^{15}+n^7p^8q^{12}+n^6p^6q^9)
\end{multline*}
Using that $\binom{n}{6} \leq \frac{n^6}{6!}$ and previous inequality yields
\begin{multline}\label{Equation: Aux6}
\lim_{n\rightarrow\infty}\PP(Y_T(n,p)>0) \geq \lim_{n\rightarrow\infty}1-\frac{6!^2}{400n^{12}p^{12}q^{18}}(n^{10}p^{11}q^{18}+n^{10}p^{12}q^{17} \\
+n^{9}p^{11}q^{16}+n^9p^9q^{18}+n^8p^{10}q^{14}+n^8p^9q^{15}+n^7p^8q^{12}+n^6p^6q^9) \\
\geq \lim_{n\rightarrow\infty}1-\frac{1296}{n^2p+n^2q+n^3pq^2+n^3p^3+n^4p^2q^4+n^4p^3q^3+n^5p^4q^6+n^6p^6q^9} \\
= 1
\end{multline}
where the limit equals $1$ because
\begin{eqnarray*}
&&n^2p\geq n^2p^2q^3 = (npq^{3/2})^2 \rightarrow \infty \\
&&n^2q \geq n^2p^2q^3 = (npq^{3/2})^2 \rightarrow \infty \\
&&n^3pq^2 \geq n^3p^3q^{9/2} = (npq^{3/2})^3 \rightarrow \infty \\
&&n^3p^3 \geq n^3p^3q^{9/2} = (npq^{3/2})^3 \rightarrow \infty \\
&&n^4p^2q^4 \geq n^4p^4q^6 = (npq^{3/2})^4 \rightarrow \infty \\
&&n^4p^3q^3 \geq n^3p^3q^{9/2} = (npq^{3/2})^3 \rightarrow \infty \\
&&n^5p^4q^6 \geq n^4p^4q^6 = (npq^{3/2})^4 \rightarrow \infty \\
&&n^6p^6q^9 = (npq^{3/2})^6 \rightarrow \infty.
\end{eqnarray*}
Combining Inequalities (\ref{Equation: Aux5}) and (\ref{Equation: Aux6}) yields the desired result.
\end{proof}

\begin{proof}[Proof of Theorem \ref{Theorem: Main theorem normality 3}]
In \cite{RAPC} it was proved that the cover ideal of a perfect graph is normal. Since bipartite graphs are perfect we have the inclusion of events
$$\{\mathcal{G}\text{ has no cycles}\}\subset \{\mathcal{G}\text{ is bipartite}\}\subset \{I_c(\mathcal{G})\text{ is normal}\}.$$
Therefore,
$$\lim_{n\rightarrow\infty}\PP(\{\mathcal{G}\text{ has no cycles}\})\leq \lim_{n\rightarrow\infty}\PP(I_c(\mathcal{G})\text{ is normal})\leq 1.$$
From Proposition \ref{Proposition: having a cycle} the limit on the left equals $1$, therefore the limit on the right must equal $1$.
\end{proof}

\begin{proof}[Proof of Theorem \ref{Theorem: Main theorem normality 2}]
From Duality criterion (Theorem \ref{Duality criterion}) we have
$$\{I_c(\mathcal{G})\text{ is not normal},\beta_0(\mathcal{G})\leq 2\}\subset \{\bar{\mathcal{G}}\text{ has a Hochster configuration}\},$$
therefore
\begin{multline*}
\PP(\{I_c(\mathcal{G})\text{ is not normal},\beta_0(\mathcal{G})\leq 2\}) \\
\leq \PP(\{\bar{\mathcal{G}}\text{ has a Hochster configuration}\})
\leq \PP(\bar{\mathcal{G}}\text{ has a cycle})
\end{multline*}
So it is enough to prove $\PP(\{\bar{\mathcal{G}}\text{ has a cycle}\})\rightarrow 0$. To prove it, observe that the probability of having an edge in $\bar{\mathcal{G}}$ is the same as not having such an edge in $\mathcal{G}$ which is $q$ and viceversa. Therefore we can replicate the proof of Proposition \ref{Proposition: having a cycle} for $\bar{\mathcal{G}}$ just switching $p$ and $q$ to conclude that
$$\lim_{n\rightarrow\infty}\PP(\{\bar{\mathcal{G}}\text{ has a cycle}\})=0,$$
provided $q=o(\frac{1}{n})$.
\end{proof}

\begin{proof}[Proof of Theorem \ref{Theorem: Main resul Krull}] 
Let us remind that dim$(S/I(\mathcal{G}))$ coindices with the independence number of the graph, this means that the following events are the same
$$\{dim(S/I(\mathcal{G}))\geq t\}=\{\mathcal{G} \text{ contains }E_t\text{ as a subgraph}\},$$
where $E_t$ is defined as in notation \ref{Notation: The graphs T and Ct}. Thus
\begin{equation}\label{Equation: Aux3}
\PP(dim(S/I(\mathcal{G}))\geq t)=\PP(Y_{E_t}(n,p)>0).
\end{equation}
So we are reduced to finding upper and lower bounds for $\PP(Y_{E_t}(n,p)>0)$. An upper bound can be found using Markov's inequality and Equation (\ref{Equation: Expectation of Ct}), namely
\begin{eqnarray*}
\PP(Y_{E_t}(n,p)>0) \leq \mathbb{E}(Y_{E_t}(n,p)) = \binom{n}{t}(1-p)^{\binom{t}{2}}.
\end{eqnarray*}
For fixed $t$, $\binom{n}{t}\leq  n^t/t! \leq n^t$ therefore 
$$\PP(Y_{E_t}(n,p)>0) \leq n^t (1-p)^{\binom{t}{2}} = [n^{2/(t-1)}(1-p)]^{\frac{t(t-1)}{2}}\rightarrow 0, \text{ as }n\rightarrow\infty$$
as long as $n^{2/(t-1)}(1-p)$ converges to $0$ as $n\rightarrow\infty$. Combining this fact with Equation (\ref{Equation: Aux3}) yields
$$\PP(dim(S/I(\mathcal{G})\geq t)\rightarrow 0, \text{ as }n\rightarrow\infty,$$
provided $q=o(\frac{1}{n^{2/(t-1)}})$. Now we provide a lower bound using Corollary \ref{Corollary: Lower bound of Ct}
\begin{equation}\label{Equation: Aux4}
\PP(Y_{E_t}(n,p)>0)\geq 1-\frac{n^{2t}}{\binom{n}{t}^2}\sum_{j=2}^t \frac{1}{n^j(1-p)^{\binom{j}{2}}}.
\end{equation}
Observe that for $2\leq j\leq t$, 
$$n^{j}q^{\binom{j}{2}}=[n^{2/(j-1)}q]^{\binom{j}{2}} \geq [n^{2/(t-1)}q]^{\binom{j}{2}}\rightarrow \infty, \text{ as }n\rightarrow\infty,$$
provided $n^{2/(t-1)}q$ diverges as $n\rightarrow\infty$. This means that each of the terms in the sum of inequality (\ref{Equation: Aux4}) approaches to $0$ as $n\rightarrow\infty$ provided $q=\omega(\frac{1}{n^{2/(t-1)}})$. Further the term $\frac{n^{2t}}{\binom{n}{t}^2}$ converges to a constant (depending on $t)$, hence from (\ref{Equation: Aux4}) and (\ref{Equation: Aux3}) we conclude that
$$\PP(dim(S/I(\mathcal{G}))\geq t)\rightarrow 1,\text{ as }n\rightarrow\infty,$$
provided $q=\omega(\frac{1}{n^{2/(t-1)}})$. This concludes the proof.
\end{proof}

\begin{proof}[Proof of Corollary \ref{Corollary: Main theorem Krull}]
Observe that
$$\PP(dim(S/I(\mathcal{G}))=t)=\PP(dim(S/I(\mathcal{G}))\geq t)-\PP(dim(S/I(\mathcal{G}))\geq t+1).$$
If $q=o(\frac{1}{n^2/t})$ and $q=\omega(\frac{1}{n^{2/(t-1)}})$ then from Theorem \ref{Theorem: Main resul Krull} the first term converges to $1$ while the second term converges to $0$ giving the desired result.
\end{proof}

\begin{proof}[Proof of Corollary \ref{Corollary: Main result regularity}]
From \cite[Proposition~3.2]{v-number} it follows that 
$$reg(S/I(\mathcal{G}))\leq dim(S/I(\mathcal{G})),$$
thus
$$1\geq \PP(reg(S/I(\mathcal{G}))\leq t-1) \geq \PP(dim(S/I(\mathcal{G}))\leq t-1),$$
from where we conclude from Theorem \ref{Theorem: Main resul Krull} as the right hand side converges to $1$ provided $q=o(\frac{1}{n^{2/(t-1)}})$.
\end{proof}

\begin{proof}[Proof of Corollary \ref{Corollary: Main result v number}]
From \cite[Corollary~3.6]{v-number} it follows that 
$$v(I(\mathcal{G}))\leq \beta_0(\mathcal{G})=dim(S/I(\mathcal{G})),$$
thus
$$1\geq \PP(v(I(\mathcal{G}))\leq t-1) \geq \PP(dim(S/I(\mathcal{G}))\leq t-1),$$
from where we conclude from Theorem \ref{Theorem: Main resul Krull} as the right hand side converges to $1$ provided $q=o(\frac{1}{n^{2/(t-1)}})$.
\end{proof}

\section*{Acknowledgments}

We would like to thank Professor Fiona Skerman for their online notes on random graphs from where we inspired some of our proofs.

\end{document}